\newtheorem{theorem}{Theorem}[section]
\newtheorem{lemma}[theorem]{Lemma}
\newtheorem{proposition}[theorem]{Proposition}
\theoremstyle{definition}
\newtheorem{assumption}[theorem]{Assumption}
\theoremstyle{remark}
\newtheorem{remark}[theorem]{Remark}
\numberwithin{equation}{section}
\tikzset{
  every path/.style={thick},
  not/.style={circle,fill=black,draw=black,inner sep=0pt,minimum size=0.5mm},
  >=stealth,
}
\protected\def\tzero{
  \tikz[baseline=0.5,scale=0.15]{
    \draw (0,1) node[not] {} -- (0,0);
  }
}
\protected\def\tuno{
  \tikz[baseline=0.5,scale=0.15]{
    \draw (-0.5,1) node[not] {} -- (0,0);
    \draw (0.5,1) node[not] {} -- (0,0);
  }
}
\protected\def\tunodue{
  \tikz[baseline=0.5,scale=0.15]{
    \draw (0,0.6) node {} -- (0,0);
    \draw (-0.5,1.2) node[not] {} -- (0,0.6);
    \draw (0.5,1.2) node[not] {} -- (0,0.6);
  }
}
\newcommand{\etazero}{\eta^{\tzero}}
\newcommand{\etauno}{\eta^{\mkern-2.5mu\tuno}}
\newcommand{\etaunodue}{\eta^{\mkern-4mu\tunodue}}
\newcommand{\e}{\operatorname{e}}
\DeclareMathOperator{\supp}{Supp}
\newcommand{\im}{\mathrm{i}}
\newcommand{\R}{\mathbf{R}}
\newcommand{\Z}{\mathbf{Z}}
\newcommand{\E}{\mathbb{E}}
\newcommand{\Prob}{\mathbb{P}}
\newcommand{\Tb}{\mathbb{T}}
\renewcommand{\Mc}{\mathcal{M}}
\newcommand{\Vc}{\mathcal{V}}
\newcommand{\Cs}{\mathscr{C}}
\newcommand{\Fs}{\mathscr{F}}
\newcommand{\Vs}{\mathscr{V}}
\newcommand{\Xs}{\mathscr{X}}
\newcommand{\Ys}{\mathscr{Y}}
\newcommand{\uno}{\mathbbm{1}}
\newcommand{\eqdef}{\vcentcolon=}
\newcommand{\scalar}[1]{\langle #1 \rangle}
\newcommand{\term}[1]{\text{\textcircled{\tiny #1}}}
\DeclareMathOperator{\plt}{\term{$<$}}
\DeclareMathOperator{\pgt}{\term{$>$}}
\DeclareMathOperator{\pgeq}{\term{$\geq$}}
\DeclareMathOperator{\peq}{\term{$=$}}
\def\MRnum#1\empty{#1}
\renewcommand{\MRhref}[2]{%
  \href{http://www.ams.org/mathscinet-getitem?mr=#1}{#2}
}
\renewcommand{\MR}[1]{
  \relax\ifhmode\unskip\space\fi
  \MRhref{\MRnum#1\empty}{\texttt{\Tiny[MR\MRnum#1\empty]}}
}
\begin{document}
  \title{Random initial conditions for semi--linear PDEs}
  \author[D. Bl\"omker]{Dirk Bl\"omker}
    \address{Institut f\"ur Mathematik, Universit\"at Augsburg, D-86135 Augsburg, Germany}
    \email{\href{mailto:dirk.bloemker@math.uni-augsburg.de}{dirk.bloemker@math.uni-augsburg.de}}
  \author[G. Cannizzaro]{Giuseppe Cannizzaro}
    \address{Mathematics Institute, University of Warwick, Gibbet Hill Rd, CV4 7AL Coventry, United Kingdom}
    \email{\href{mailto:G.Cannizzaro@warwick.ac.uk}{G.Cannizzaro@warwick.ac.uk}}
  \author[M. Romito]{Marco Romito}
    \address{Dipartimento di Matematica, Universit\`a di Pisa, Largo Bruno Pontecorvo 5, I--56127 Pisa, Italia}
    \email{\href{mailto:marco.romito@unipi.it}{marco.romito@unipi.it}}
    \urladdr{\url{http://people.dm.unipi.it/romito}}
  \thanks{{M.\,R.} acknowledges the support of
    the Universit\`a di Pisa under the \emph{PRA – Progetti
    di Ricerca di Ateneo} (Institutional Research Grants)
    - Project no. PRA\_2016\_41 \emph{Fenomeni singolari
    in problemi deterministici e stocastici ed
    applicazioni}.}
  %\subjclass[2010]{Primary ; Secondary }
  \keywords{random initial condition, semilinar PDEs}
  \date{July 10, 2017}
  \begin{abstract}
    We analyze the effect of random initial conditions
    on the local well--posedness of semi--linear PDEs,
    to investigate to what extent recent ideas on
    singular stochastic PDEs can prove useful in
    this framework.
  \end{abstract}
\maketitle
%%
%%
%%%%%%%%%%%%%%%%%%%%%%%%%%%%%%%%%%%%%%%%%%%%%%%%%%%%%%%%%%%%%%
\section{Introduction}

This paper is a ``proof of concept'' that tries to
investigate the effect of random initial conditions
for the existence of partial differential equations
of evolution type.
These ideas have been pioneered by Bourgain \cite{Bou1994,Bou1996},
and recently there have been a lot of activity,
since the seminal papers by Burq and Tzvetkov \cite{BurTzv2008a,BurTzv2008b}.
We refer to the recent lecture notes of Tzvetkov
\cite{Tzv2016} for a more detailed account of the literature.

Most, if not all, of the literature focuses on
the interesting case of dispersive or hyperbolic
equations (with exceptions, see for instance
\cite{NahPavSta2013}). On the other hand in
that case the intrinsic difficulties of the
problems examined may hide the limitations
and features of the method we are analyzing.

Here we focus on semi--linear PDEs, because the theory on the
linear propagator is well established and do not obfuscate
the issues derived by the random initial condition method. Our
aim is thus to shed light on the possibilities and limitations
of the method.

The main subject of our investigation is a semi--linear
PDE with a simple linear operator (think of Laplacian
or bi--Laplacian operator), and a polynomial nonlinearity,
and we expect that the equation satisfies some kind
of scaling invariance. The idea is that this class
of equations represents, at first order, a general
class of fundamental equations.
In other words, we are interested in fundamental
characteristics, so we focus on homogeneous
nonlinearities, that ensure scaling laws.  

Scaling invariance gives an indication on which spaces
we can expect to solve the equation by a fixed point
argument. It is a well understood fact (see for
instance \cite{dispwiki}) that a
critical space of initial conditions is a space
whose norm is left invariant by the scaling of
the equation. Continuity of the nonlinearity
in sub--critical spaces (i.\,e., smaller than
a critical space) is not prevented by scaling,
and thus in such spaces a fixed point strategy
is expected to be successful (when only using
multi--linear estimates)

We analyse the problem in the class of (negative)
H\"older spaces. On the one hand they provide
the largest critical spaces, on the other hand
in such spaces there is no apparent gain in
using a Gaussian randomization of the initial
condition. Indeed, for a Gaussian random variable,
summability for every $p\geq1$ comes for free once
one knows that summability holds for at least one
exponent.

A full account of the general strategy considered
is given in the next section. In short,
we decompose the solution in the linear propagator
on the initial condition (that, for rough initial
condition, should capture all the degrees of
irregularity of the solution) and a (hopefully
smoother) remainder. This provides a new equation
for the remainder, where a new term is the
nonlinearity computed in the ``rough'' term.
Thus the main feature of the
random initial condition is to tame
the ``roughness'' of this term and make it
well defined for a wider range of the parameters.
Here regularity/roughness should be understood
in terms of singularity at $t=0$ (all these
functions are smooth when $t>0$).

In the setting we have described, we are thus able to answer
to a series of questions that we believe are relevant for
the subject.
\begin{enumerate}
  \item\textbf{Is a random initial condition useful (in this setting)?}
    The general strength of the method has been already
    established
    in the literature we have cited before. In this setting
    the method is effective in a series of examples (see the next
    section), namely we prove {a.\,s.} existence of local solutions
    with respect to suitable Gaussian measures supported over function
    spaces larger than those available through a standard fixed
    point argument.
  \item\textbf{When is it useful?}
    The validity of the method
    is graded though by a ratio between the linear and
    non--linear part of the equation (our parameter $\delta$ from
    Assumption~\ref{a:nonlinear_assumption}, that is,
    roughly speaking, the ratio between the largest order of derivative
    of the non--linear term, and the largest order of derivative of
    the linear term). The larger is the ratio, the lower is the
    validity.
  \item\textbf{Are initial distributions supported on spaces of
    super--critical initial conditions possible?}
    Unfortunately the method does not allow to prove
    results for super--critical data. Our analysis on
    semi--linear PDEs allows to set the analysis on
    H\"older spaces of negative order, that are
    essentially the largest critical spaces.
    We do not get results outside such spaces.
    
    A simple explanation is that, as already explained,
    critical spaces are determined by the scaling
    properties of the problem. By randomizing the
    initial condition we do not introduce any
    additional argument that ``breaks'' the scaling
    invariance.
    
    We point out that the situation is in a way
    different when dealing with dispersive/hyperbolic
    equations, where the properties we know of the
    linear propagator do not allow to set the analysis
    in arbitrary function spaces. 
  \item\textbf{May a second order (or beyond) expansion
    be useful?}
    As illustrated in the next section, the randomization
    is exploited by decomposing the solution in a
    ``irregular' term (the linear propagator computed
    over the random initial condition) and a ``smoother''
    remainder. The first term should capture the highest
    degree of irregularity of the solution. It is thus
    reasonable to believe that whenever the initial
    condition is ``very'' irregular, adding further
    additional terms in a, so-to-say, Taylor expansion,
    might be helpful. It turns out that in the setting
    of semi--linear PDEs this is not necessary, since
    the linear term (think of the Laplace operator)
    already makes the first term of the expansion
    super--smooth (the irregularity is read in terms
    of a singularity in time at $t=0$). In the setting
    of dispersive/hyperbolic equations, where the
    regularization of the linear problem is way much
    milder, additional terms in the expansion may be
    effective \cite{Tzv2016}.
  \item\textbf{Is renormalization needed?}
    In the recent theory on singular stochastic PDEs
    \cite{Hai2013,Hai2014,GubImkPer2015} some stochastic
    objects can be defined only when taking suitable infinities
    into account. Here no ``infinities'' are possible, since
    the linear propagator makes the stochastic objects
    smooth. Whenever a stochastic object cannot be defined,
    the random initial condition cannot fix the problem
    (see the example in Section~\ref{s:counter}).
    We notice though that for dispersive problem this
    is in general not the case and renormalization may
    prove useful (see for instance \cite{OhTzv2017})
  \item\textbf{Can we borrow further ideas from the theory
    of singular stochastic PDEs?}
    One of the deepest ideas in \cite{Hai2013,Hai2014,GubImkPer2015},
    that goes beyond the global decomposition first considered
    for such problems in the stochastic setting in
    \cite{DapDeb2002,DapDeb2003,DapDebTub2007}, is the
    local description of the degree of irregularity of a solution.
    In Section~\ref{s:local} we present a result based on the
    local description to prove local existence for logarithmically
    sub--critical initial conditions for the one-dimensional
    Burgers equation. We notice that a local description is
    useful only when the initial condition has regularity
    close to the critical level. We believe that this contribution
    is the main novelty of the paper.    
\end{enumerate}
%%

%%
%%
%%%%%%%%%%%%%%%%%%%%%%%%%%%%%%%%%%%%%%%%%%%%%%%%%%%%%%%%%%%%%%
\section{The main examples}

The examples we consider are equation on the $d$--dimensional
torus, with periodic boundary conditions, of the form
\begin{equation}\label{e:maineq}
  \partial_t u
    = Au + B(u),
\end{equation}
where $A$ is a linear operator and $B$ is a
multi--linear operator.
%%
%%%%%%%%%%%%%%%%%%%%%%%%%%%%%%%%
\subsection{The general strategy}

We expect that some
scaling invariance holds, that is there are $\sigma$,
$\tau$ such that if $u$ is solution of \eqref{e:maineq},
then so is
\begin{equation}\label{e:mainscaling}
  (t,x)\mapsto \lambda^\sigma u(\lambda^\tau t, \lambda x).
\end{equation}
%%
%%%%%%%%%%%%%%%%%%%%%%%%%%%%%%%%%%%%%%%%%%%%%%%
\subsubsection{Deterministic initial condition}

We first consider \eqref{e:maineq}, with a deterministic
initial condition. We expect that the maximal Besov
space\footnote{more precisely, the maximal critical space
  where we are able to solve the equation is $\Vs^{\alpha,\beta}$,
  defined in formula \eqref{e:vanishing} (see also
  Remark~\ref{r:icc1}), with $\beta=1-\delta$,
  and $\delta>\frac12$.}
where we are able to solve our equation \eqref{e:maineq}
by means of a fixed point argument is $\Cs^{-\sigma}$, since
the homogeneous version of this space is invariant under
the transformation $u\leadsto \lambda^\sigma u(\lambda\cdot)$.

Assume the non--linear term $B$ is bi--linear and symmetric
(we will discuss more general cases in Sections~\ref{s:asymmetric}
and~\ref{s:cubic}),
and set
\[
  \Vc(u_1,u_2)(t)
    = \int_0^t \e^{A(t-s)}B(u_1,u_2)\,ds.
\]
A standard fixed point theorem (Theorem~\ref{t:fix1})
solves the above equation as
\[
  u
    = \eta_1 + \Vc(u,u),
\]
where $\eta_1=\e^{tA}u(0)$.
To this end we assume that the nonlinearity is suitably
continuous, namely there is $\delta\in[0,1)$ such that
\[
  \|\Vc(u_1,u_2)(t)\|_{\Cs^\alpha}
    \lesssim \int_0^t (t-s)^{-\delta}\|u_1\|_{\Cs^\alpha}\|u_2\|_{\Cs^\alpha}\,ds.
\]
By scaling (see Remark~\ref{r:fix_remark}), $\delta=1-\frac{\alpha+\sigma}\tau$.
The fixed point is solved in spaces $\Xs^{\alpha,\beta}_T$,
defined by means of the norm
\[
  \|\cdot\|_{\alpha,\beta,T}
    = \sup_{t\in[0,T]}t^\beta\|\cdot\|_{\Cs^\alpha},
\]
that encode the singularity at $t=0$.
The value $\alpha$ must be large enough that $\Vc$
has some continuity property. On the other hand the
larger is $\alpha$, the larger is $\beta$ to compensate
the difference in regularity with the initial condition.
We will choose $\alpha$ minimal to minimize the singularity.
Theorem~\ref{t:fix1} ensures now the existence of
a unique local solution as long as $\|\eta_1\|_{\alpha,\beta,T}\to0$
as $T\to\infty$, and $\beta<\frac12$, $\beta+\delta\leq1$.
If $\delta>\frac12$ the result is optimal and includes the
critical space.
Further improvements are only possible through some additional
information, such as a--priori estimates, that break
the scaling.

If on the other hand $\delta\leq\frac12$, the fixed point
theorem is not optimal and we can solve the problem
with initial conditions in $\Cs^r$ only for
$r>\alpha-\frac12\tau$ (see Remark~\ref{r:icc1}).
A possible strategy could be to single out $\eta_1$, the
most singular part of $u$. Set $u = v+\eta_1$, then
\begin{equation}\label{e:vfp}
  v
    = \Vc(v,v) + 2\Vc(v,\eta_1) + \eta_2,
\end{equation}
where $\eta_2=\Vc(\eta_1,\eta_1)$.
Unfortunately this does not really help without
a more detailed understanding of the nonlinearity
(see Remark~\ref{r:icc2}).
%%
%%%%%%%%%%%%%%%%%%%%%%%%%%%%%%%%%%%%%%%%
\subsubsection{Random initial condition}

We turn to random initial conditions.
For simplicity and to make our point clear,
we assume $u_0$ is a random field on the
torus with independent Gaussian Fourier
components. Regularity of $u_0$, as well
as of $\etazero=t\mapsto\e^{tA}u_0$ (here
we adopt Hairer's notation to make clear
that we deal with random objects), is
standard and does not give any advantage.

The crucial point is that randomness
plays its major role in taming the term
$\etauno=B(\etazero,\etazero)$,
and
in turn $\etaunodue=\Vc(\etazero,\etazero)$,
is well defined for a wider range of the
parameters (see Remark~\ref{r:betterstoch}).
To do these computations, we take some
simplifying assumptions, in particular
we assume that, at small scales,
$B$ is essentially
$D^a\bigl((D^b\cdot)(D^b\cdot)\bigr)$.

Since the random initial condition
has smoothened the singularity of
$\etaunodue$, it is now worthwhile
to apply the fixed point strategy
to the formulation~\ref{e:vfp}.
To do this, we need to check that the
assumptions of Theorem~\ref{t:fix2}
are met by the terms $\etauno$
and $\etaunodue$. To this aim,
Proposition~\ref{p:etazero_reg}
and Theorem~\ref{t:eta12_reg}
yield all the needed information.
We end up with a series of
inequalities over the parameters
$\tau,a,b$ that 
restrict the possible regularity
of the random field.

The first example we consider (surface growth) is one of those
where $\delta>\frac34$, thus the deterministic theory is
sufficient (as already known from \cite{BloRom2012}).
The second (KPZ) is borderline, since $\delta=\frac12$.
For the third (Kuramoto--Sivashinsky), the deterministic
theory is not sufficient to get initial conditions up to
the critical space, and this is only possible with random
initial conditions Finally, in the fourth example
(reaction-diffusion), not even random  initial conditions
are sufficient to catch the critical case.

Notice that the random initial condition method
always fails
because $\etauno$ has a singularity in time that is too
strong. In particular, going further to a second order
expansion does not help.

In the last part of the paper we shall give some
remarks and present some additional examples.
Since in the paper we will analyse 
mass--conservative, symmetric quadratic
nonlinearities, roughly speaking of the
form $D^a((D^b\cdot)^2)$ that allow for optimal
results, in Section~\ref{s:asymmetric}
we will discuss what happens in asymmetric case,
while in Section~\ref{s:cubic} we will look at
the case of nonlinearities with higher powers,
and finally in Section~\ref{s:nomass} we will
relax the constraint of mass conservation.

Section~\ref{s:counter} is somewhat different.
We will see there through an example that the fact
that even a random initial condition cannot in
general cover all cases up to the critical level
(as we shall see in the examples of
Sections~\ref{s:ks} and~\ref{s:rd})
is not a limitation of our proofs.

Finally, in Section~\ref{s:local} we present a result
that shows that, when dealing with (almost)
critical random initial conditions, a global
decomposition in terms of stochastic objects
and a remainder term as in \eqref{e:vfp},
is not sufficient. Our strategy is to
understand the local degree of irregularity
of the solution and to exploit this
fact to gain a tiny (logarithmic)
improvement that allows to close
the fixed point argument. This may
be seen as a glimpse of the
extremely sophisticated ideas
introduced in \cite{Hai2013,Hai2014,GubImkPer2015}.
Notice though that in the aforementioned papers
they use, roughly
speaking, two fundamental ideas: the first
is to understand the most irregular part of the
solution -- as we have done. The second is
to exploit again the probabilistic structure
to define the terms in the most irregular part
of the solution. This is apparently not needed
here.
%%
%%
%%%%%%%%%%%%%%%%%%%%%%%%%%%%%%%%%
\subsection{Surface growth}

Consider the following example (see \cite{BloRom2015}
for a general overview),
\[
  \partial_t u
    = -\Delta^2 u - \Delta u - \Delta|\nabla u|^2,
\]
with periodic boundary conditions and zero mean.
The equation has scaling invariance according
to formula \eqref{e:mainscaling}, with exponents
$\sigma=0$ and $\tau=4$ (the lower order
term is neglected for the invariance). Thus the
\emph{critical} space for fixed point is at
the level of $\Cs^0$ (more precisely,
$\Vs^{-1,1/4}$, defined in formula~\eqref{e:vanishing}).

Assumption~\ref{a:nonlin} holds with values
$a=2$, $b=1$, and Assumption~\ref{a:nonlinear_assumption}
holds with $\alpha=1$, $\delta=\frac34$.
Notice that the choice of $\alpha$ is the
minimal value that gives sense to the
non--linear term. The standard fixed
point result, Theorem~\ref{t:fix1},
holds sharp for initial conditions
in $\Cs^\gamma$ with $\gamma\geq0$.
The argument yielding the critical space has been
given also in \cite{BloRom2012}. We do not need
random initial condition here.
%%
%%%%%%%%%%%%%%%%%%%%%%%%%%%%%%
\subsection{KPZ}\label{s:kpz}

Consider the following problem on the torus,
\begin{equation}\label{e:kpz}
  \partial_t u
    = \Delta u - \Mc|\nabla u|^2,
\end{equation}
subject to periodic boundary conditions and zero mean,
where $\Mc$ is the projector onto the zero mean space,
namely
\[
  (\Mc z)(x)
    = z(x) - \int_{\Tb^d} z(y)\,dy.
\]
With additive noise this is a fundamental model in mathematical
physics, recently solved by Hairer \cite{Hai2013}.
The equation has scaling invariance with exponents
$\sigma=0$ and $\tau=2$. Thus the critical
space is at the level of $\Cs^0$ (more precisely
$\Vs^{0,1/2}$).
It can be easily checked that Assumption~\ref{a:nonlin}
holds with $a=0$, $b=1$, and that
Assumption~\ref{a:nonlinear_assumption}
holds with $\alpha=1$, $\delta=\frac12$, and again
$\alpha$ has been chosen as minimal.
Theorem~\ref{t:fix1}, holds
for initial conditions in $\Cs^\gamma$, with $\gamma>0$.
Unfortunately the critical
space $\Vs^{0,\frac12}$ cannot be captured neither by
the deterministic results (Theorem~\ref{t:fix1} and
Theorem~\ref{t:fix2}), nor by the random initial condition.
%%
%%%%%%%%%%%%%%%%%%%%%%%%%%%
\subsection{KS}\label{s:ks}

Consider the following mass--conservative Kuramoto--Sivashinsky
equation
\[
  \partial_t u
    = -\Delta^2 u - \Delta u - \Mc|\nabla u|^2,
\]
with periodic boundary conditions and zero mean.
The scaling exponents
(when the lower order term $\Delta u$ is neglected)
are $\sigma=2$ and $\tau=4$, and the critical space for
fixed point is $\Cs^{-2}$. Assumption~\ref{a:nonlin}
holds again with $a=0$, $b=1$, while
Assumption~\ref{a:nonlinear_assumption} holds
with $\alpha=1$, $\delta=\frac14$, where
again $\alpha$ is the minimal number of derivatives
to give sense to the nonlinearity. Here
Theorem~\ref{t:fix1} holds for initial conditions
in $\Cs^\gamma$, with $\gamma>-1$, which is
still smaller than the critical space we have identified.
%%
%%%%%%%%%%%%%%%%%%%%%%%%%%%%%%%%%%%%%%%%%%%%
\subsection{Reaction--diffusion}\label{s:rd}

Consider the following equation with periodic boundary
conditions and zero mean,
\[
  \partial_t u
    = \Delta u - u + \Mc u^2,
\]
The scaling exponents are $\sigma=2$, $\tau=2$ (neglecting
as usual the lower order term), with critical space
$\Cs^{-2}$. Assumption~\ref{a:nonlin} holds
with values $a=0$, $b=0$, Assumption~\ref{a:nonlinear_assumption}
holds with $\delta=0$ and the minimal value $\alpha=0$.
Thus Theorem~\ref{t:fix1} applies for initial conditions
in $\Cs^\gamma$ with $\gamma>-1$.
%%
%%%%%%%%%%%%%%%%%%%%%%%%%%%%%%%%%%%%%%%%%%%%%%%%%%%%%%%%%%
\subsection{A short summary on Besov spaces} 

We will work with Besov spaces, which have somewhat maximal regularity
in terms of integrability. This or H\"older spaces are natural
spaces for the regularity of Gaussian random variables.

Besov spaces are defined via Littlewood-Paley projectors.
Let $\chi,\varrho$ be non--negative smooth radial functions such
that
\begin{itemize}
  \item The support of $\chi$ is contained in a ball and the support
    of $\varrho$ is contained in an annulus;
  \item $\chi(\xi)+\sum_{j\ge0}\varrho(2^{-j}\xi)=1$ for all
    $\xi\in\R^d$;
  \item $\supp(\chi)\cap\supp(\varrho(2^{-j}\cdot))
    = \emptyset$, for $j\geq 1$ and
    $\supp(\varrho(2^{-i}\cdot))\cap\supp(\varrho(2^{-j}\cdot))
    = \emptyset$ when $|i-j| > 1$.
\end{itemize}
Set $\varrho_j(x):=\varrho(2^{-j} x)$
for all $j\geq 0$ and $\varrho_{-1}(x):=\chi(x)$.
The Littlewood-Paley blocks are given in terms of the
discrete Fourier transform,
\[
  \Delta_j u
    = (2\pi)^{-d}\sum_{k\in\Z^d} \varrho_j(k) \Fs_{\Tb^d}(u)(k)e_k(x)
    = \sum_{k\in\Z^d} \varrho_j(k) u_k e_k.
\]
Let $\alpha\in\R$, $p,q\in[1,\infty]$, we define the Besov space
$B^\alpha_{p,q}(\Tb^d)$ as the closure of the space of
smooth periodic functions with respect to the norm
\[
  \|u\|^q_{B_{p,q}^\alpha(\Tb^d)}
    \eqdef \|(2^{j\alpha}\|\Delta_ju\|_{L^p(\Tb^d)})_{j\geq-1}\|_{\ell^q}
\]
We will mainly deal with the special case $p=q=\infty$,
so we introduce the notation
$\Cs^\alpha\eqdef B_{\infty,\infty}^\alpha(\Tb^d)$
and denote by
\[
  \|u\|_\alpha
    = \|u\|_{B_{\infty,\infty}^\alpha}
\]
its norm.
%%
%%%%%%%%%%%%%%%%%%%%%%%%%%%%%%%%%%%%%
\subsubsection{The Bony paraproduct}

The Bony paraproduct $\plt$ is defined for
distributions $f,g$ with Littlewood--Paley
blocks $(\Delta_j f)_{j\geq-1}$ and
$(\Delta_j g)_{j\geq-1}$ as
\[
  f\plt g
    = \sum_{m\leq n-1} (\Delta_m f)(\Delta_n g).
\]
The term $f\pgt g$ is then defined as
$f\pgt g=g\plt f$, and the \emph{resonant}
term is defined by
\[
  f\peq g
    = \sum_{|m-n|\leq 1}(\Delta_m f)(\Delta_n g),
\]
so that, when the product makes sense,
$f\cdot g=f\plt g + f\peq g + f\pgt g$.
%%
%%
%%%%%%%%%%%%%%%%%%%%%%%%%%%%%%%%%%%%%%%%%%%%%%%%%%%%%%%%%%%%%%
\section{The fixed point argument}\label{s:fixedpoint}

We outline here an abstract fixed point argument that
yields local existence and uniqueness for initial conditions
in the scale of H\"older--Besov spaces. The argument is
given in two flavours: standard and with rough initial
condition. To this end we state some assumptions on the
linear and non--linear part of the equation \eqref{e:maineq}
that capture the essential features of our examples
and that needed here.
\begin{assumption}[Schauder estimates]\label{a:schauder_assumption}
  The unbounded operator $A$ generates an analytic
  semigroup. Moreover there is $\tau>0$ such that
  the following estimates hold,
  \begin{equation}\label{e:schauder_assumption}
    \|\e^{tA}u\|_{\alpha+\beta}
      \leq ct^{-\frac\beta\tau}\|u\|_\alpha,
  \end{equation}
  for every $\alpha\in\R$, every $u\in B^\alpha_{\infty,\infty}$,
  and every $\beta\geq0$.
  \end{assumption}
Define the integrated nonlinearity,
\[
  \Vc(u_1,u_2)(t)
    = \int_0^t \e^{(t-s)A}B(u_1(s),u_2(s))\,ds.
\]
\begin{assumption}\label{a:nonlinear_assumption}
  There are $\alpha\in\R$, $\delta\in[0,1)$, and $c>0$ such that
  \begin{equation}\label{e:nonlinear_assumption}
    \|\Vc(u_1,u_2)(t)\|_\alpha
      \leq c\int_0^t (t-s)^{-\delta}
        \|u_1(s)\|_\alpha\|u_2(s)\|_\alpha\,ds
  \end{equation}
\end{assumption}
\begin{remark}\label{r:fix_remark}
  A few remarks on the assumptions above,
  \begin{itemize}
    \item it is fairly easy to check that the exponent
      on the right--hand side of \eqref{e:schauder_assumption}
      follows by a scaling argument if \eqref{e:mainscaling}
      holds;
    \item likewise, if \eqref{e:mainscaling} holds (and
      $\delta>0$), then again by scaling invariance
      $\delta=1-\frac{\alpha+\sigma}\tau$ if the inequalities
      are optimal;
    \item there is no apparent gain if we assume
      different norms for $u_1$, $u_2$ on the right--hand
      side of \eqref{e:nonlinear_assumption}. On the contrary,
      usually this gives a $\delta$ that depends on the
      smallest norm exponent.
  \end{itemize}
\end{remark}
Consider the equation \eqref{e:maineq} in its mild formulation,
\[
  u(t)
    = \e^{At}u_0
      + \Vc(u,u)(t).
\]
%%
%%%%%%%%%%%%%%%%%%%%%%%%%%%%%%%%%%%%%%%%%%%%%%
\subsection{The standard fixed point argument}

Given $u_0$, set $\eta_1(t)=\e^{At}u_0$.  We wish to solve
by fixed point the problem
\begin{equation}\label{e:fix1}
  u
    = \eta_1+\Vc(u,u),
\end{equation}
in the normed space
\[
  \Xs^{\alpha,\beta}_T
    \eqdef\{u:\|u\|_{\alpha,\beta,T}\eqdef\sup_{t\in[0,T]} t^\beta\|u(t)\|_\alpha<\infty\}\,.
\]
We immediately have the following proposition. 
\begin{proposition}\label{p:nlb}
  For $\beta<\frac12$ and $\delta + \beta\leq 1$,
  \[
    \|\mathcal{V}(u_1,u_2)\|_{\alpha,\beta,T}
      \leq cT^{1-\beta-\delta}
        \|u_1\|_{\alpha,\beta,T}\|u_2\|_{\alpha,\beta,T} 
  \]
\end{proposition}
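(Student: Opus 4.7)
The plan is a direct computation: apply Assumption~\ref{a:nonlinear_assumption} to bound $\|\Vc(u_1,u_2)(t)\|_\alpha$ pointwise in $t$, insert the definition of the weighted norm to replace $\|u_i(s)\|_\alpha$ by $s^{-\beta}\|u_i\|_{\alpha,\beta,T}$, and reduce the remaining task to evaluating a Beta-type integral.

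Concretely, I would start from
\[
\|\Vc(u_1,u_2)(t)\|_\alpha
 \le c\int_0^t (t-s)^{-\delta}\|u_1(s)\|_\alpha\|u_2(s)\|_\alpha\,ds
 \le c\|u_1\|_{\alpha,\beta,T}\|u_2\|_{\alpha,\beta,T}\int_0^t (t-s)^{-\delta} s^{-2\beta}\,ds,
\]
valid for $t\in[0,T]$. The change of variable $s=tr$ turns the integral into $t^{1-\delta-2\beta}B(1-2\beta,1-\delta)$, where the Beta function is finite precisely because $\beta<\tfrac12$ (so $1-2\beta>0$) and $\delta<1$ (already granted by Assumption~\ref{a:nonlinear_assumption}). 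So $\beta<\tfrac12$ is exactly the hypothesis that makes the time integral convergent at $s=0$.

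Multiplying by $t^\beta$ gives
\[
t^\beta\|\Vc(u_1,u_2)(t)\|_\alpha
 \le c\, t^{1-\delta-\beta}\|u_1\|_{\alpha,\beta,T}\|u_2\|_{\alpha,\beta,T},
\]
and here the assumption $\delta+\beta\le 1$ ensures the exponent $1-\delta-\beta$ is nonnegative, so the right-hand side is monotone in $t$ and its supremum on $[0,T]$ is attained at $t=T$. Taking sup yields the claimed estimate with constant $c=c_{\text{Assump}}\cdot B(1-2\beta,1-\delta)$.

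There is essentially no obstacle: the only subtle points are (i) recognising that the two strict/non-strict inequalities in the hypothesis correspond exactly to integrability of the Beta integral ($\beta<\tfrac12$) and monotonicity of $t^{1-\delta-\beta}$ ($\delta+\beta\le 1$), and (ii) ensuring the constant is uniform in $T$, which it is because the Beta integral depends only on $\beta,\delta$, not on $T$. The rest is bookkeeping.
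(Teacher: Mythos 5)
Your proof is correct and follows essentially the same route as the paper: insert Assumption~\ref{a:nonlinear_assumption}, pull out the weighted norms via $\|u_i(s)\|_\alpha\le s^{-\beta}\|u_i\|_{\alpha,\beta,T}$, and evaluate $\int_0^t(t-s)^{-\delta}s^{-2\beta}\,ds$, with $\beta<\tfrac12$ giving convergence and $\beta+\delta\le1$ giving the factor $T^{1-\beta-\delta}$. The paper's proof is just a terser version of the same computation, so there is nothing to add.
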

\begin{proof}
Assumption \eqref{e:nonlinear_assumption} ensures

\[
  t^\beta\|\Vc(u_1,u_2)(t)\|_\alpha
    \leq ct^\beta
      \|u_1\|_{\alpha,\beta,T} \|u_2\|_{\alpha,\beta,T}
      \int_0^t (t-s)^{-\delta} s^{-2\beta}\,ds.
\]
Hence, as long as $\beta<\frac12$, the statement follows.
\end{proof}
The proposition above allows to verify the following theorem.
\begin{theorem}\label{t:fix1}
  Consider $\beta<1/2$ and $\beta\leq 1-\delta$, and assume
  $\|\eta_1\|_{\alpha,\beta,T}\to 0$ for $T\to0$. Then there
  is $T>0$ such that the equation \eqref{e:fix1} has a unique
  fixed point in $\Xs^{\alpha,\beta}_T$.
\end{theorem}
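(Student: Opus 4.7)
The plan is to apply Banach's contraction principle to the map $\Phi\colon u\mapsto \eta_1 + \Vc(u,u)$ acting on a suitable closed ball of $\Xs^{\alpha,\beta}_T$. Proposition~\ref{p:nlb} already supplies the only analytic ingredient needed, namely the bilinear bound
\[
  \|\Vc(u_1,u_2)\|_{\alpha,\beta,T}
    \leq cT^{1-\beta-\delta}\|u_1\|_{\alpha,\beta,T}\|u_2\|_{\alpha,\beta,T},
\]
with exponent $1-\beta-\delta\geq 0$ by hypothesis. In particular, $\Phi$ sends $\Xs^{\alpha,\beta}_T$ into itself as soon as $\|\eta_1\|_{\alpha,\beta,T}<\infty$, which holds for small $T$ by the vanishing assumption.

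Next I would look for $R>0$ such that $\Phi(B_R)\subseteq B_R$ and $\Phi$ is a strict contraction on $B_R\eqdef\{u\in\Xs^{\alpha,\beta}_T:\|u\|_{\alpha,\beta,T}\leq R\}$. The self-mapping condition reduces to $\|\eta_1\|_{\alpha,\beta,T}+cT^{1-\beta-\delta}R^2\leq R$. Exploiting the bilinearity of $B$ through the polarisation identity $\Vc(u_1,u_1)-\Vc(u_2,u_2)=\Vc(u_1-u_2,u_1+u_2)$, the contraction bound reads
\[
  \|\Phi(u_1)-\Phi(u_2)\|_{\alpha,\beta,T}
    \leq 2cT^{1-\beta-\delta}R\,\|u_1-u_2\|_{\alpha,\beta,T},
\]
so one needs $2cT^{1-\beta-\delta}R<1$. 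The canonical choice $R\eqdef 2\|\eta_1\|_{\alpha,\beta,T}$ makes both requirements collapse to a single inequality of the form $4cT^{1-\beta-\delta}\|\eta_1\|_{\alpha,\beta,T}\leq \tfrac12$, which is fulfilled for all sufficiently small $T$ since $T^{1-\beta-\delta}$ is bounded as $T\to 0$ and $\|\eta_1\|_{\alpha,\beta,T}\to 0$ by hypothesis. Banach's fixed point theorem then produces a unique fixed point of $\Phi$ inside $B_R$.

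The only point requiring a moment of care is promoting uniqueness from the ball $B_R$ to the whole of $\Xs^{\alpha,\beta}_T$: given two fixed points $u,v\in\Xs^{\alpha,\beta}_T$, I would argue that the difference $u-v=\Vc(u-v,u+v)$ satisfies, for every $T'\in(0,T]$,
\[
  \|u-v\|_{\alpha,\beta,T'}
    \leq cT'^{1-\beta-\delta}(\|u\|_{\alpha,\beta,T'}+\|v\|_{\alpha,\beta,T'})\|u-v\|_{\alpha,\beta,T'},
\]
and then observe that, since a fixed point necessarily inherits the vanishing behaviour of $\eta_1$ at $t=0$, the prefactor can be made strictly less than $1$ on a small interval $[0,T']$. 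A standard continuation argument then extends the identification $u\equiv v$ up to time $T$. Overall there is no real obstacle: the estimates in Proposition~\ref{p:nlb}, combined with the vanishing of $\|\eta_1\|_{\alpha,\beta,T}$, do all the work, and the boundary case $\beta+\delta=1$ -- where the time factor $T^{1-\beta-\delta}$ is merely $1$ and everything rests on the vanishing of $\|\eta_1\|_{\alpha,\beta,T}$ -- is the most delicate point to keep an eye on.
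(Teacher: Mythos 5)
Your construction of the fixed point is exactly the argument the paper intends: Banach's principle on the ball $B_R$ with $R=2\|\eta_1\|_{\alpha,\beta,T}$, self-mapping and contraction both supplied by Proposition~\ref{p:nlb} (with polarisation for the difference), closed by the hypotheses $\beta<\tfrac12$, $\beta+\delta\leq1$ and $\|\eta_1\|_{\alpha,\beta,T}\to0$ as $T\to0$. This part is correct, including the boundary case $\beta+\delta=1$, where only the vanishing of $\|\eta_1\|_{\alpha,\beta,T}$ closes the estimates; it coincides with the paper's own (only sketched) proof, whose structure is made explicit in the proof of Theorem~\ref{t:fix2}.

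The one step that does not hold up as stated is the promotion of uniqueness from $B_R$ to all of $\Xs^{\alpha,\beta}_T$ in the critical case. You justify it by asserting that ``a fixed point necessarily inherits the vanishing behaviour of $\eta_1$ at $t=0$''. When $\beta+\delta<1$ you do not even need this claim: the prefactor $c\,T'^{1-\beta-\delta}\bigl(\|u\|_{\alpha,\beta,T}+\|v\|_{\alpha,\beta,T}\bigr)$ tends to $0$ with $T'$, and your local identification plus continuation goes through. But when $\beta+\delta=1$ the claimed inheritance is not automatic: setting $a(T')\eqdef\|u\|_{\alpha,\beta,T'}$ for a fixed point $u$, Proposition~\ref{p:nlb} only gives
\[
  a(T')\leq\|\eta_1\|_{\alpha,\beta,T'}+c\,a(T')^2,
\]
and letting $T'\downarrow0$ the monotone limit $a_0$ satisfies $a_0\leq c\,a_0^2$, i.e.\ $a_0=0$ \emph{or} $a_0\geq 1/c$. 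Nothing in the hypotheses excludes a second fixed point whose weighted norm stays bounded below by $1/c$ on every interval $[0,T']$; this is exactly the familiar obstruction to unconditional uniqueness in critical spaces. So for $\beta+\delta=1$ your argument yields uniqueness only in the class of $u$ with $\|u\|_{\alpha,\beta,T'}\to0$ as $T'\to0$ (equivalently, within a small ball), which is also all that the contraction itself provides; if the statement is read as uniqueness in the whole of $\Xs^{\alpha,\beta}_T$, either an additional argument or a restriction of the uniqueness class is required.
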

Notice that the initial conditions $u_0$ to whom the above
theorem applies are those such that
$\|t\mapsto\e^{tA}u_0\|_{\alpha,\beta,T}\to0$ as $T\to0$.
Let us denote by $\Vs^{\alpha,\beta}$ such a space, namely
\begin{equation}\label{e:vanishing}
  \Vs^{\alpha,\beta}
    = \bigl\{u_0: \lim_{T\to0}
      \|t\mapsto\e^{tA}u_0\|_{\alpha,\beta,T} = 0\bigr\}.
\end{equation}
The most interesting case is when $\beta=1-\delta$,
since $\Vs^{\alpha,\beta}$ becomes critical.
Indeed, a simple
computation shows that the norm $\|\cdot\|_{\alpha,\beta,T}$
is invariant by the scaling \eqref{e:mainscaling}, in the
sense that
\[
  \sup_{[0,T]}t^\beta
      [\lambda^\sigma u(\lambda^\tau t,\lambda\cdot)]_\alpha
    = \lambda^{\sigma+\alpha-\tau\beta}\sup_{[0,\lambda^\tau T]}
      t^\beta[u(t,\cdot)]_\alpha=\sup_{[0,\lambda^\tau T]}
      t^\beta[u(t,\cdot)]_\alpha,
\]
since, according to Remark~\ref{r:fix_remark}, $\sigma +\alpha=\tau\beta$ and where $[\cdot]_\alpha$ is the semi--norm of $\Cs^\alpha$.
\begin{remark}[Initial conditions in $\Cs^r$]\label{r:icc1}
  Another way to understand the computation above is to realize
  that the larger is $\beta$, the larger is the set of initial
  conditions. To see this we look for the minimal values of
  $\gamma$ such that $u_0\in\Cs^r$ yields
  $\eta_1\in\Vs^{\alpha,\beta}$
  for some $\beta$ compatible with the assumptions of the
  above theorem. By \eqref{e:schauder_assumption},
  \[
    \|\eta_1\|_{\alpha,\beta,T}
      \leq T^{\beta-\frac{\alpha-r}\tau}\|u_0\|_r,
  \]
  thus $\eta_1\in\Vs^{\alpha,\beta}$ when $r>\alpha-\beta\tau$.
  Therefore,
  \begin{itemize}
    \item if $\delta>\frac12$, we can take the maximal value
      $\beta=1-\delta$ and $u_0\in\Cs^r$ $\leadsto$
      $\eta_1\in\Vs^{\alpha,\beta}$ for all
      $r>-\sigma$. With this choice of the parameters,
      the space $\Vs^{\alpha,\beta}$ is critical.
    \item if $\delta\leq\frac12$, then we are
      restricted to $\beta<\frac12$ and we have
      $u_0\in\Cs^r$ $\leadsto$ $\eta_1\in\Vs^{\alpha,\beta}$
      under the sub--optimal condition $r>\alpha-\frac\tau2$.
  \end{itemize}
\end{remark}
%%
%%%%%%%%%%%%%%%%%%%%%%%%%%%%%%%%%%%%%%%%%%%%%%%%%%%%%%%%%%%%%%%%%%%%%%
\subsection{Fixed point with rough initial condition}\label{s:roughfp}

Suppose that the initial condition $u_0$ is too rough to apply
the results of the previous section. Set as before
$\eta_1=t\mapsto\e^{tA}u_0$ and let
\[
  v
    = u - \eta_1.
\]
Instead of \eqref{e:fix1}, this time we solve the
transformed problem
\begin{equation}\label{e:fix2}
  v
    = \Vc(v,v) + 2\Vc(v,\eta_1) + \eta_2,
\end{equation}
where we have set $\eta_2=\Vc(\eta_1,\eta_1)$.
The key argument is that by taking a random distribution
over the initial condition, Gaussian for instance, the
quadratic term $\eta_2$ is well defined, although
it could not be in principle defined in general using only
the regularity properties of $\eta_1$ or, more precisely,
the singularity at $t=0$.
We expect that the
mixed product $\Vc(v,\eta_1)$ might be fine, since $\eta_1$
is smooth away from $0$, and $v$ is zero at $0$.

We notice also that for the same reasons we expect
$\eta_2$ to be continuous at $0$, therefore
there is no need to use weighted spaces such as
$\Xs^{\alpha,\beta}_T$ with a positive exponent $\beta$.
In other words negative exponents are also allowed.
The following proposition is a minor modification of
Proposition~\ref{p:nlb}.
\begin{proposition}\label{p:nlb2}
  If $\beta + \gamma <1$ and $\delta + \gamma\leq 1$,
  then
  \[
    \|\Vc(u_1,u_2)\|_{\alpha,\beta,T}
      \leq c T^{1-\delta-\gamma}\|u_1\|_{\alpha,\beta,T}
        \|u_2\|_{\alpha,\gamma,T}.
  \]
\end{proposition}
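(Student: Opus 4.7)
The plan is to mimic the argument for Proposition~\ref{p:nlb}, but with the asymmetric weights $\beta$ and $\gamma$ assigned to $u_1$ and $u_2$ respectively, and to track where the two hypotheses enter. Starting from Assumption~\ref{a:nonlinear_assumption}, I would estimate the integrand by inserting the two weighted norms: $\|u_1(s)\|_\alpha \leq s^{-\beta}\|u_1\|_{\alpha,\beta,T}$ and $\|u_2(s)\|_\alpha \leq s^{-\gamma}\|u_2\|_{\alpha,\gamma,T}$, giving
\[
  \|\Vc(u_1,u_2)(t)\|_\alpha
    \leq c\,\|u_1\|_{\alpha,\beta,T}\|u_2\|_{\alpha,\gamma,T}
      \int_0^t (t-s)^{-\delta} s^{-\beta-\gamma}\,ds.
\]

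The next step is the standard substitution $s=tu$ which turns the integral into a Beta function: it equals $B(1-\delta,1-\beta-\gamma)\,t^{1-\delta-\beta-\gamma}$. The hypothesis $\beta+\gamma<1$ (together with $\delta<1$ built into Assumption~\ref{a:nonlinear_assumption}) is precisely what makes this Beta integral finite. Multiplying by $t^\beta$ to form the target seminorm then produces the factor $t^{1-\delta-\gamma}$.

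Finally, since we seek a bound uniform for $t\in[0,T]$, I would invoke the second hypothesis $\delta+\gamma\leq 1$, which makes the exponent $1-\delta-\gamma$ nonnegative, so that $t^{1-\delta-\gamma}\leq T^{1-\delta-\gamma}$; taking the supremum over $t\in[0,T]$ yields the announced inequality. There is essentially no obstacle here: both hypotheses have a transparent role, one at $s=0$ (integrability of the weight product) and one at $s=t$ (monotonicity in $T$). The only point worth underlining is the asymmetric matching of exponents, with $\beta$ placed on $u_1$ so that the resulting left-hand side carries the same weight $\beta$ — this is the shape needed to apply the estimate to the two terms $\Vc(v,v)$ and $\Vc(v,\eta_1)$ in \eqref{e:fix2}, where $v$ will carry weight $\beta$ and $\eta_1$ a different (possibly nonpositive) exponent $\gamma$.
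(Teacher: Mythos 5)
Your proposal is correct and follows exactly the route the paper intends: the paper proves Proposition~\ref{p:nlb} by inserting the weighted norms into \eqref{e:nonlinear_assumption} and bounding the resulting singular integral, and states that Proposition~\ref{p:nlb2} is a minor modification of that argument, which is precisely your asymmetric version with weights $s^{-\beta}$ and $s^{-\gamma}$, the Beta integral finite by $\beta+\gamma<1$ and $\delta<1$, and $\delta+\gamma\leq1$ giving $t^{1-\delta-\gamma}\leq T^{1-\delta-\gamma}$.
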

Based on this proposition, we can prove the following theorem.
\begin{theorem}\label{t:fix2}
  Consider $\beta<\frac12$, $\beta+\delta\leq 1$,
  $\gamma+\beta<1$, and $\delta + \gamma\leq 1$.
  Given $u_0$, assume that
  $\|\eta_1\|_{\alpha,\gamma,T}\to 0$ and
  $\|\eta_2\|_{\alpha,\beta,T}\to 0$, for $T\to 0$.
  Then there is $T>0$ such that the problem \eqref{e:fix2}
  has a unique fixed point in $\Xs^{\alpha,\beta}_T$.
\end{theorem}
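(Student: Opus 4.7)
The plan is a Banach contraction argument for the map
\[
\Phi(v) \eqdef \Vc(v,v) + 2\Vc(v,\eta_1) + \eta_2
\]
on a closed ball $B_R \subset \Xs^{\alpha,\beta}_T$. Three ingredients are needed: a self-map estimate, a contraction estimate, and a joint smallness choice of the parameters $R$ and $T$.

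First I would estimate each term of $\Phi$ separately. Proposition~\ref{p:nlb} applied to $\Vc(v,v)$ (whose hypotheses $\beta < 1/2$ and $\delta + \beta \leq 1$ are exactly those of the theorem) gives
\[
\|\Vc(v,v)\|_{\alpha,\beta,T} \leq c\, T^{1-\delta-\beta}\|v\|_{\alpha,\beta,T}^{2}.
\]
For the mixed term $\Vc(v,\eta_1)$ I would invoke Proposition~\ref{p:nlb2} with $u_1 = v \in \Xs^{\alpha,\beta}_T$ and $u_2 = \eta_1 \in \Xs^{\alpha,\gamma}_T$; the conditions $\beta + \gamma < 1$ and $\delta + \gamma \leq 1$ in the theorem's statement match exactly the hypotheses of Proposition~\ref{p:nlb2}, yielding
\[
\|\Vc(v,\eta_1)\|_{\alpha,\beta,T} \leq c\, T^{1-\delta-\gamma}\|v\|_{\alpha,\beta,T}\|\eta_1\|_{\alpha,\gamma,T}.
\]
The term $\eta_2$ is controlled directly by hypothesis. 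Combining these with the bilinearity identity $\Vc(v_1,v_1)-\Vc(v_2,v_2)=\Vc(v_1-v_2,v_1+v_2)$, I obtain for $v,v_1,v_2 \in B_R$
\[
\|\Phi(v)\|_{\alpha,\beta,T} \leq c\, T^{1-\delta-\beta} R^{2} + 2 c\, T^{1-\delta-\gamma} R\,\|\eta_1\|_{\alpha,\gamma,T} + \|\eta_2\|_{\alpha,\beta,T},
\]
\[
\|\Phi(v_1)-\Phi(v_2)\|_{\alpha,\beta,T} \leq \bigl(2 c\, T^{1-\delta-\beta} R + 2 c\, T^{1-\delta-\gamma}\|\eta_1\|_{\alpha,\gamma,T}\bigr)\|v_1-v_2\|_{\alpha,\beta,T}.
\]

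The main obstacle, and the reason one must be careful about the order of quantifiers, is the critical case $\delta + \beta = 1$: the exponent $1 - \delta - \beta$ vanishes and the quadratic piece carries no $T$-smallness, so it cannot be shrunk by taking $T$ small. The fix is to pick $R$ first, small enough that $2cR \leq 1/4$, which tames the quadratic term regardless of $T$; then, using the vanishing hypotheses $\|\eta_1\|_{\alpha,\gamma,T}\to 0$ and $\|\eta_2\|_{\alpha,\beta,T}\to 0$, choose $T$ so small that $2 c\, T^{1-\delta-\gamma}\|\eta_1\|_{\alpha,\gamma,T} \leq 1/4$ and $\|\eta_2\|_{\alpha,\beta,T} \leq R/2$. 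Under these choices $\Phi$ maps $B_R$ into itself and is a $1/2$-contraction, so the Banach fixed point theorem provides a unique fixed point of \eqref{e:fix2} in $B_R$. Uniqueness in the whole space $\Xs^{\alpha,\beta}_T$ then follows by a standard continuity-in-$T$ argument: any competing fixed point, restricted to a sufficiently small $[0,T']$, lies in $B_R$ thanks to the vanishing of $\eta_2$ and the bound on $\Vc(v,v)+2\Vc(v,\eta_1)$, whence the contraction estimate forces coincidence.
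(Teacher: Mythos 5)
Your proposal is correct and follows essentially the same route as the paper's proof: a Banach contraction for $\Phi(v)=\Vc(v,v)+2\Vc(v,\eta_1)+\eta_2$, with the self-map bound from Proposition~\ref{p:nlb}, the mixed-term bound from Proposition~\ref{p:nlb2}, and the bilinearity identity for the contraction estimate. Your additional bookkeeping (fixing the ball radius $R$ before $T$ to handle the critical exponent $\beta+\delta=1$, and the remark on extending uniqueness beyond the ball) merely spells out details the paper leaves implicit.
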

\begin{proof}
  The proof is again by fixed point argument and  very similar
  to the proof of Theorem \ref{t:fix1}. This time though
  we use a different weight in time, which might have a
  non-positive exponent, since the initial condition is $0$.
  
  For the self-mapping we use
  \[
    \|\Vc(v,v) + 2 \Vc(v,\eta_1) +
        \eta_2\|_{\alpha,\beta,T} 
      \leq c(\|v\|^2_{\alpha,\beta,T}
        + \|v\|_{\alpha,\beta,T}\|\eta_1\|_{\alpha,\gamma,T})
        + \|\eta_2\|_{\alpha,\beta,T}.
  \]
  Likewise, for the contraction property,
  \[
    \begin{aligned}
      \lefteqn{\|\Vc(v_1,v_1) + 2\Vc(v_1,\eta_1) - \Vc(v_2,v_2)
          + 2 \Vc(v_2,\eta_1)\|_{\alpha,\beta,T}}\qquad\qquad&\\
    &= \|\Vc(v_1+v_2,v_1-v_2)
       + 2\Vc(v_1-v_2,\eta_1)\|_{\alpha,\beta,T}\\
    &\leq c
       \bigl(\|v_1+v_2\|_{\alpha,\beta,T}
         +\|\eta_1\|_{\alpha,\gamma,T}\bigr)
         \cdot\|v_1-v_2\|_{\alpha,\beta,T},
    \end{aligned}
  \]
  by Propositions~\ref{p:nlb} and \ref{p:nlb2}.
\end{proof}
\begin{remark}[Initial conditions in $\Cs^r$]\label{r:icc2}
  In comparison with Remark~\ref{r:icc1}, we look for
  initial conditions $u_0\in\Cs^r$ such that the assumptions
  of Theorem~\ref{t:fix2} hold. By the (scaling--wise
  optimal) estimate \eqref{a:nonlinear_assumption}
  we have
  \[
    \|\eta_2(t)\|_\alpha
      \lesssim \int_0^t (t-s)^{-\delta}\|\eta_1(s)\|_\alpha^2\,ds
      \lesssim t^{1-2\gamma-\delta}\|\eta_1\|_{\alpha,\gamma,T},
  \]
  therefore $\|\eta_2\|_{\alpha,\beta,T}\leq
  T^{\beta+1-2\gamma-\delta}\|\eta_1\|_{\alpha,\gamma,T}^2$
  if $\gamma<\frac12$ and $2\gamma+\delta\leq\beta+1$.
  The same computation of Remark~\ref{r:icc1} yields
  $\eta_1\in\Xs^{\alpha,\gamma}$ if $u_0\in\Cs^r$ and
  $r\geq\alpha-\gamma\tau$.
  To minimize the value of $r$ we wish to take $\gamma$ as
  large as possible, thus $\gamma\approx\frac12$. Unfortunately
  this yields the condition $r>\alpha-\frac\tau2$, which is
  the same of Remark~\ref{r:icc1}.
  
  In conclusion the additional expansion has not given,
  at least for general initial conditions, any additional
  benefit.
\end{remark}
%%
%%
%%
%%%%%%%%%%%%%%%%%%%%%%%%%%%%%%%%%%%%%%%%%%%%%%%%%%%%%%%%%%%%%%
\section{Stochastic objects}

Here we discuss the existence and regularity of the
terms appearing in the fixed
point arguments of the previous section, when the
initial condition $u_0$ is a random variable  with
peculiar structure. Here we focus on the case of
bi--linear mass--conservative nonlinearity $B$,
we will comment later on the no--moving--frame case
and the need of renormalization.
%%
%%%%%%%%%%%%%%%%%%%%%%%%%%%%%%%%%%%%%%%%%%%%%%%
\subsection{Diagonal (simplifying) assumptions}

Here we greatly simplify our problem~\eqref{e:maineq},
by assuming that the linear operator acts diagonally
on the Fourier basis, and that the non-linear
operator is a \emph{bona fide} product. The reason
is that we wish to exploit the decorrelations of
the random initial condition.

Let $(e_k)_{k\in\Z^d}$  be the standard
Fourier basis of the torus $\Tb_d$ of normalized complex
exponentials. 
\begin{assumption}\label{a:linear_diagonal}
  For every $k\in\Z^d$,
  $Ae_k = \lambda_k e_k$, with
  $\lambda_k\sim -c|k|^\tau$,
  for some constant $c>0$.
\end{assumption}
In the sequel we will assume, with no harm,
that $c=1$.
\begin{assumption}\label{a:nonlin}
  For each $k,m,n\in\Z^d$, let $B_{kmn}$ denote
  the product $B_{kmn}=\scalar{B(e_m,e_n),e_k}$. Then $B$ is such that
  $B_{kmn}=0$ if $k\neq m+n$ and
  \[
  B(u,v)
  = \sum_{k\in\Z^d}\sum_{m+n=k}B_{kmn}u_mv_ne_k
  \]
  for $u = \sum_m u_m e_m$, $v=\sum v_n e_n$.
  Moreover
  \begin{itemize}
    \item (mass conservation) $B_{0mn}=0$ for
    all $m,n$,
    \item (regularity) there are numbers $a,b\geq0$
    such that for $m+n=k$,
    \begin{equation}\label{e:Bwavewise}
      |B_{kmn}|
        \leq c|k|^a|m|^b|n|^b.
    \end{equation}
  \end{itemize}
\end{assumption}
\begin{remark}\label{r:Bscaling}
  We do not assume that \eqref{e:Bwavewise} is sharp. As already
  pointed out in Remark~\ref{r:fix_remark} for the fixed point argument,
  the asymmetric case does not help. If on the other
  hand \eqref{e:Bwavewise} is
  sharp, a elementary scaling argument shows that $\sigma+a+2b=\tau$.
  Indeed, if $a,b$ are optimal, then roughly speaking
  $B\approx D^a\bigl((D^b\cdot)(D^b\cdot)\bigr)$ that
  scales as
  \[
    B(u_\lambda,u_\lambda)
      = \lambda^{2\sigma+a+2b} (B(u,u))_\lambda,
  \]
  where $u_\lambda(t,x)=\lambda^\sigma u(\lambda^\tau t,\lambda x)$.
  On the other hand $(\partial_t-A)u_\lambda=\lambda^{\tau+\sigma}
  (\partial_t u-Au)_\lambda$.

  Actually the same result could be directly obtained,
  starting from \eqref{e:Bwavewise}, by elementary
  paraproduct estimates as those in \cite{GubImkPer2015}.
  These estimates would provide also, together with
  \eqref{e:schauder_assumption}, a connection with
  Assumption~\ref{a:nonlinear_assumption}.
\end{remark}
%%
%%%%%%%%%%%%%%%%%%%%%%%%%%%%%%%%%%%%%
\subsection{Random initial condition}

For simplicity, we give Gaussian structure to the
initial condition. Other distributions are possible
though, once one assume essentially sub--normality
and hyper--contraction.
\begin{assumption}\label{a:u0}
  The random variable $u_0$ is Gaussian with the
  following representation in Fourier modes,
  \[
    u_0(x)
      = \sum_{k\in\Z^d} \phi_k \xi_k e_k,
  \]
  where $(\xi_k)_{k\in\Z^d}$ is a family
  of centred complex valued Gaussian random variables
  such that $\bar\xi_k=\xi_{-k}$ for all
  $k$, and with covariance
  \[
    \E[\xi_{k_1}\bar{\xi}_{k_2}]
      = \uno_{\{k_1=k_2\}},
  \]
  and $(\phi_k)_{k\in\Z^d}$ is a sequence of ``weights''.
  Moreover,
  \begin{itemize}
    \item (mass conservation) $\phi_0(0)=0$,
    \item (regularity) there is $\theta\in\R$ such
    that $|\phi_k|\sim|k|^\theta$.
  \end{itemize}
\end{assumption}
%%
%%%%%%%%%%%%%%%%%%%%%%%%%%%%%%%%%%%%%%%%%%%%%%%%%%%%%%%%%%%%%%%%%%
\subsection{Regularity of the stochastic objects}\label{s:randreg}

Given a random initial condition $u_0$ as above, we set
\[
  \etazero
    = t\mapsto\e^{At}u_0,
      \qquad
  \etauno
    = B(\etazero,\etazero),
      \qquad
  \etaunodue
    = \Vc(\etazero,\etazero)
    = t\mapsto\int_0^t\e^{(t-s)A}\etauno(s)\,ds.
\]
In the rest of the section we study the regularity of
$\etazero$, $\etauno$, and $\etaunodue$ in H\"older--Besov
spaces.
%%
%%%%%%%%%%%%%%%%%%%%%%%%%%%%%%%%%%%
\subsubsection{Regularity of $u_0$}

We start with the regularity of $u_0$. This follows
from standard results.
Indeed, by \cite[Theorem 6.3]{Kah1985},
it follows that there is $c>0$ such that
\[
  \Prob[\|\Delta_j u_0\|_{L^\infty}
      \geq c\sqrt{j}2^{\frac12(2\theta+d)j}]
    \lesssim 2^{-2j}.
\]
Then the first Borel--Cantelli lemma ensures that
there is a random number $C$ such that {a.\,s.},
\begin{equation}\label{e:u0reg}
  \|\Delta_j u_0\|_{L^\infty}
    \leq C\sqrt{j}2^{\frac12(2\theta+d)j}.
\end{equation}
In conclusion $\|u_0\|_\alpha$ is almost surely finite
(and with exponential moments) as long as
$\alpha<-\theta-\frac{d}2$.
Notice that this holds in general for \emph{sub--normal}
independent sequences $(\xi_k)_{k\in\Z^d}$ (since so is
for the results in \cite{Kah1985}). Here  a random
variable $X$ is sub--normal if
$\E[\e^{\lambda X}]\leq\e^{\lambda^2/2}$.

In the Gaussian case we can completely characterize the
regularity of $u_0$ as follows. An elementary computation
shows that $\E[\|u_0\|^2_{H^\alpha}]=\infty$ if
$\alpha\geq-\theta-\frac{d}2$. Hence by Fernique's theorem
$u_0\not\in H^\alpha$ {a.\,s.}. Finally,
$\Cs^{\alpha'}\subset H^\alpha$ if $\alpha'>\alpha$,
thus $u_0\not\in \Cs^\alpha$ {a.\,s.} for every
$\alpha>-\theta-\frac{d}2$.
%%
%%%%%%%%%%%%%%%%%%%%%%%%%%%%%%%%%%%%%%%%
\subsubsection{Regularity of $\etazero$}

We turn to study the regularity of $\etazero$ in terms
of spaces $\Xs^{\alpha,\beta}_T$. The previous
considerations and Assumption~\ref{e:schauder_assumption}
immediately yield the following result.
\begin{proposition}\label{p:etazero_reg}
  If $u_0$ is as in Assumption~\ref{a:u0}, then
  $\etazero\in\Xs^{\alpha,\gamma}_T$ for every $T>0$ 
  and all $\alpha$, $\gamma$ such that
  \begin{itemize}
    \item $\alpha< -(\theta+\frac{d}2)$, $\gamma=0$,
    \item $\alpha\geq -(\theta+\frac{d}2)$,
      $\gamma>\frac{\alpha+\theta+\frac{d}2}\tau$.
  \end{itemize}
\end{proposition}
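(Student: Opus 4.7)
The plan is to combine two ingredients already in hand: the a.s.\ H\"older--Besov regularity of $u_0$ established in the previous subsection, namely $u_0\in\Cs^{\alpha'}$ a.s.\ for every $\alpha'<-(\theta+\tfrac{d}{2})$, and the Schauder estimate \eqref{e:schauder_assumption}. No probabilistic work beyond what was done to bound $\|\Delta_j u_0\|_{L^\infty}$ is needed; the statement is essentially the pathwise interpolation between these two inputs.

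First I would handle the ``sub--critical'' case $\alpha<-(\theta+\tfrac{d}{2})$, $\gamma=0$. Since $u_0\in\Cs^\alpha$ almost surely, applying \eqref{e:schauder_assumption} with $\beta=0$ (which just expresses the uniform boundedness of the analytic semigroup on $\Cs^\alpha$) gives
\[
  \sup_{t\in[0,T]}\|\etazero(t)\|_\alpha
    \leq c\,\|u_0\|_\alpha<\infty
      \qquad\text{a.s.,}
\]
which is exactly $\etazero\in\Xs^{\alpha,0}_T$.

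For the remaining case $\alpha\geq-(\theta+\tfrac{d}{2})$, I would pick an auxiliary exponent $\alpha'<-(\theta+\tfrac{d}{2})$ so that $u_0\in\Cs^{\alpha'}$ a.s., and apply \eqref{e:schauder_assumption} with the gain $\beta=\alpha-\alpha'\geq0$ to obtain
\[
  \|\etazero(t)\|_\alpha
    = \|\e^{tA}u_0\|_{\alpha'+(\alpha-\alpha')}
    \leq c\,t^{-\frac{\alpha-\alpha'}\tau}\|u_0\|_{\alpha'}.
\]
Multiplying by $t^\gamma$ shows that $\etazero\in\Xs^{\alpha,\gamma}_T$ whenever $\gamma\geq(\alpha-\alpha')/\tau$. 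Letting $\alpha'\uparrow -(\theta+\tfrac{d}{2})$, the right--hand side converges to $(\alpha+\theta+\tfrac{d}{2})/\tau$, so the claim follows for every $\gamma>(\alpha+\theta+\tfrac{d}{2})/\tau$.

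There is no real obstacle here: the only mildly delicate point is that one cannot take $\alpha'=-(\theta+\tfrac{d}{2})$ directly (since the a.s.\ regularity of $u_0$ is strictly sub--critical, as noted for Gaussian $u_0$ via Fernique), which is why the admissible range of $\gamma$ is a strict rather than a weak inequality in the second case. This is the reason for the dichotomy in the statement and is handled simply by choosing $\alpha'$ arbitrarily close to, but below, $-(\theta+\tfrac{d}{2})$.
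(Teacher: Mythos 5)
Your proposal is correct and follows exactly the route the paper intends: the paper merely remarks that the a.s.\ bound $\|u_0\|_{\alpha'}<\infty$ for $\alpha'<-(\theta+\tfrac{d}{2})$ together with the Schauder estimate \eqref{e:schauder_assumption} ``immediately yield'' the proposition. Your two cases (taking $\beta=0$ for $\alpha<-(\theta+\tfrac{d}{2})$, and $\beta=\alpha-\alpha'$ with $\alpha'$ chosen just below $-(\theta+\tfrac{d}{2})$, which is why the inequality on $\gamma$ is strict) are precisely that argument written out in detail.
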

Moreover, using Assumptions~\ref{a:linear_diagonal},
we immediately obtain
\[
  \|\etazero_t-\etazero_s\|_\alpha
    \lesssim s^{-\frac\epsilon\tau}(t-s)^{\frac\epsilon\tau}\|u_0\|_\alpha,
\]
if $\alpha<-(\theta+\frac{d}2)$, and similarly if
$\alpha\geq-(\theta+\frac{d}2)$. In conclusion we always
have $\etazero\in C([0,T];\Cs^\alpha)$ if
$\alpha<-(\theta+\frac{d}2)$, and $\etazero\in C((0,T];\Cs^\alpha)$
otherwise.
\begin{remark}
  We see here that a random initial condition
  does not give any advantage at the level
  of $\etazero$. Due to the assumptions of
  Theorems~\ref{t:fix1} and \ref{t:fix2},
  $\etazero$ will always be supported over
  critical spaces.
\end{remark}
%%
%%%%%%%%%%%%%%%%%%%%%%%%%%%%%%%%%%%%%%%
\subsubsection{Regularity of $\etauno$}

The regularity of $\etauno$, or more precisely the
singularity in time at $t=0$, is a fundamental step.
Here Assumption~\ref{a:linear_diagonal} will play
a crucial role.

Since $(\xi_k)_{k\in\Z^d_0}$ is a sequence of
independent real standard Gaussian random variables,
we see immediately that $\etauno$ is in the
second Wiener chaos. Moreover, as we shall verify
below, the $0^\text{th}$--order component is
zero, therefore $\etauno$ is in the
homogeneous second Wiener chaos.
To prove that there is no $0^\text{th}$--order
component, we recall that the $0^\text{th}$--order
component is simply the expectation of
$\etauno$,
\[
  \begin{multlined}[.97\linewidth]
    \E[\etauno(t,x)]
      = \sum_{k\in\Z^d}\Bigl(\sum_{m+n=k}B_{kmn}\phi_m\phi_n
        \e^{-t(|m|^\tau+|n|^\tau)}\delta_{m+n=0}\Bigr)e_k(x) =\\
      = \Bigl(\sum_{m+n=0}B_{0mn}|\phi_m|^2 
        \e^{-2t|m|^\tau}\Bigr)e_0(x)
      = 0,
  \end{multlined}
\]
by Assumptions~\ref{a:nonlin} and \ref{a:u0}.

For $\beta\geq0$,
\[
  \begin{aligned}
    \E[|\Delta_j\etauno|^2]
      &\lesssim\sum_{|k|\sim 2^j}\sum_{m+n=k}
        \varrho_j(k)^2 |B_{kmn}|^2|\phi_m|^2|\phi_n|^2 \e^{-2t(|m|^\tau+|n|^\tau)}\\
      &\lesssim t^{-2\beta}\sum_{|k|\sim 2^j}|k|^{2a}\sum_{m+n=k}
        \frac{|m|^{2\theta+2b}|n|^{2\theta+2b}}
          {(|m|^\tau+|n|^\tau)^{2\beta}}\\
  \end{aligned}
\]
The sum extended over all $m$, $n$ such that $m+n=k$
can be decomposed, by symmetry, in two sums over the
two sets $A_k=\{(m,n):m+n=k,|m|\geq|n|\geq\frac12|k|\}$
and $B_k=\{(m,n):m+n=k,|n|\leq\frac12|k|\leq|m|\}$.
For the sum over $A_k$, notice that on $A_k$ we have
$\frac13|m|\leq|n|\leq|m|$, thus, whatever is the sign
of $2\theta+2b$,
\[
  \sum_{A_k}\frac{|m|^{2\theta+2b}|n|^{2\theta+2b}}
      {(|m|^\tau+|n|^\tau)^{2\beta}}
    \lesssim\sum_{|m|\geq\frac12|k|}\frac1
      {|m|^{2\beta\tau-4b-4\theta}}
    \lesssim\frac1{|k|^{2\beta\tau-4b-4\theta-d}}.
\]
Here we need $2\beta\tau-4b-4\theta>d$, otherwise the sum would
diverge.

For the sum over $B_k$, notice that we also have
$|m|\leq\frac32|k|$, thus whatever is the sign
of $2b+2\theta-2\beta\tau$,
\[
  \sum_{B_k}\frac{|m|^{2\theta+2b}|n|^{2\theta+2b}}
      {(|m|^\tau+|n|^\tau)^{2\beta}}
    \lesssim\sum_{B_k}|m|^{2b+2\theta-2\beta\tau}|n|^{2b+2\theta}
    \lesssim |k|^{2b+2\theta-2\beta\tau}
      \sum_{|n|\leq|k|}|n|^{2b+2\theta}
\]
It is a standard fact to see that the sum on the right hand
side of the formula above behaves as $|k|^{(2b+2\theta+d)\vee0}$
(and as $\log |k|$ if $2b+2\theta=-d$).

In conclusion we need $2\beta\tau-4b-4\theta>d$, and in
that case,
\[
  \begin{aligned}
    \E[|\Delta_j\etauno|^2]
      &\lesssim t^{-2\beta}\sum_{|k|\sim 2^j}
        |k|^{2a-2\beta\tau+2b+2\theta+(2b+2\theta+d)\vee0}\\
      &\lesssim t^{-2\beta}
        2^{j(2a-2\beta\tau+2b+2\theta+(2b+2\theta+d)\vee0)},
  \end{aligned}
\]
with a multiplicative correction term of order $j$ (that
does not change our conclusions below) in the case
$2b+2\theta=-d$.
Therefore, by \cite[Lemma A.9]{GubImkPer2015} it follows that
\[
  \sup_t t^{2\beta}\E[\|\etauno\|_\alpha^2]
    <\infty
\]
for $\alpha<\beta\tau-\chi_1$, with $\beta\tau>\chi_0$,
where
\begin{equation}\label{e:chi}
  \chi_0
    = 2b+2\theta+\tfrac12d,
      \qquad
  \chi_1
    = a+b+\theta+(b+\theta+\tfrac12d)_+.
\end{equation}
By hyper--contractivity in the second Wiener chaos
\cite{Nel1973,Sim1974}, the following result follows.
\begin{lemma}\label{l:bound1}
  If $\alpha,\beta\in\R$ are such that
  \[
    \beta
      > \beta_0(\alpha)
      \eqdef\Bigl(\frac{\alpha+\chi_1}\tau\Bigr)
        \vee\Bigl(\frac{\chi_0}\tau\Bigr)_+,
  \]
  then for every $p\geq1$,
  \[
    \sup_{t\geq0}\E[(t^{\beta}\|\etauno\|_\alpha)^p]
      <\infty
  \]
\end{lemma}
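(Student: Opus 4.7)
The plan is to combine the $L^2$ computation carried out just before the statement with Gaussian hypercontractivity in the second Wiener chaos and a Besov embedding to pass from block-wise estimates to a bound on the full $\Cs^\alpha$ norm.

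First, I would recall the two facts already established in the discussion preceding the lemma: that $\etauno=B(\etazero,\etazero)$ belongs to the second Wiener chaos, and that its zeroth-order component vanishes by the mass conservation assumptions on $B$ and on $\phi$, so that $\etauno$ actually lies in the \emph{homogeneous} second chaos. Together with the block-wise estimate
\[
  \E[|\Delta_j\etauno(t,x)|^2]
    \lesssim t^{-2\beta}\,
      2^{j(2a-2\beta\tau+2b+2\theta+(2b+2\theta+d)\vee 0)},
\]
valid for $\beta\tau>\chi_0$, this is the starting point.

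Next, I would upgrade the $L^2$ estimate to arbitrary moments. Since $\Delta_j\etauno(t,x)$ belongs for each fixed $(t,x)$ to the homogeneous second Wiener chaos, Gaussian hypercontractivity (Nelson, Simon) gives
\[
  \E[|\Delta_j\etauno(t,x)|^{2q}]
    \leq c_q\bigl(\E[|\Delta_j\etauno(t,x)|^2]\bigr)^q
\]
for every $q\geq1$, with a deterministic constant $c_q$. Hence, for every $q\geq1$,
\[
  \sup_{x\in\Tb^d}\E[|\Delta_j\etauno(t,x)|^{2q}]
    \lesssim t^{-2q\beta}\,
      2^{jq(2a-2\beta\tau+2b+2\theta+(2b+2\theta+d)\vee 0)}.
\]

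The third step is to pass from these block-wise moment bounds to a Besov bound by means of \cite[Lemma A.9]{GubImkPer2015}, which says that if the $L^{2q}(\Omega\times\Tb^d)$ norm of $\Delta_j v$ decays like $2^{-j\alpha'}$ with $q$ large enough, then $v\in\Cs^\alpha$ almost surely with moments, for every $\alpha<\alpha'-d/(2q)$. Choosing
\[
  \alpha'=\beta\tau-\chi_1,
\]
so that the exponent above matches (recalling the definitions \eqref{e:chi}), and taking $q$ large enough so that the loss $d/(2q)$ is smaller than the slack $\beta\tau-\alpha-\chi_1>0$ (which is exactly the first clause in the definition of $\beta_0$), gives
\[
  \sup_{t\geq0}\E[(t^\beta\|\etauno(t)\|_\alpha)^{2q}]<\infty.
\]
Since $p\leq 2q$ for $q$ large, the case of general $p\geq1$ follows by Jensen. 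The condition $\beta\tau>\chi_0$ is the second clause in $\beta_0$ and is needed to make the $L^2$ sum over $A_k$ converge.

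The only genuinely delicate point is the $j$--logarithmic correction that appears in the borderline case $2b+2\theta=-d$, already noted in the text; it contributes a factor $j^q$ which is harmlessly absorbed in any strictly better Besov exponent, so the strict inequality $\beta>\beta_0(\alpha)$ in the statement is precisely what one needs.
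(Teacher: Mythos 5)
Your proposal is correct and follows essentially the same route as the paper: the block-wise $L^2$ estimate established before the statement, hypercontractivity for the (homogeneous) second Wiener chaos, and \cite[Lemma A.9]{GubImkPer2015} to pass to the $\Cs^\alpha$ norm, with the strict inequality $\beta>\beta_0(\alpha)$ absorbing both the $d/(2q)$ loss and the logarithmic correction in the borderline case. The only difference is cosmetic: you apply hypercontractivity block-wise before the Besov embedding, whereas the paper invokes it at the level of the norm after Lemma A.9, and both orderings yield the uniform-in-$t$ moment bound.
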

\begin{remark}\label{r:betterstoch}
  The advantage of the random initial condition
  emerges here, as we see that we have a milder
  singularity at $t=0$. For comparison,
  let $u_0$ be a non--random initial condition and
  set, as in Section~\ref{s:fixedpoint},
  $\eta_1=t\mapsto\e^{tA}u_0$. We wish
  to find initial conditions
  where the minimal singularity in time of the
  Littlewood--Paley block of $B(\eta_1,\eta_1)$
  is worse than the one of random initial conditions.
  To this aim,
  assume that $B_{kmn}\approx |k|^a|m|^b|n|^b$
  and that the Fourier coefficients of $u_0$ are
  so that $u_0(k)\approx |k|^\theta$. Then
  \[
    |\Delta_j B(\eta_1,\eta_1)
      \approx \sum_{|k|\sim 2^j}|k|^a\sum_{m+n=k}
        |m|^{b+\theta}|n|^{b+\theta}
        \e^{-t(|m|^\tau+n^\tau)},
  \]
  and, for each $k$,
  \[
    \begin{multlined}[.95\linewidth]
      \sum_{m+n=k} |m|^{b+\theta}|n|^{b+\theta}
          \e^{-t(|m|^\tau+n^\tau)}
        \gtrsim\sum_{A_k} |m|^{b+\theta}|n|^{b+\theta}
          \e^{-t(|m|^\tau+n^\tau)}\\
        \approx\int_{|k|}^{+\infty}\rho^{2b+2\theta+d-1}
          \e^{-2t\rho^\tau}\,d\rho
        \gtrsim\int_{|k|}^{t^{-1/\tau}}\rho^{2b+2\theta+d-1}\,d\rho
        \sim t^{-\frac{2b+2\theta+d}\tau},
    \end{multlined}
  \]
  where $A_k$ is as above.
\end{remark}
%%
%%%%%%%%%%%%%%%%%%%%%%%%%%%%%%%%%%%%%%%%%%
\subsubsection{Regularity of $\etaunodue$}

By means of Assumption~\ref{a:schauder_assumption},
we can prove that $\etaunodue$ is in a $\Xs^{\alpha,\beta}_T$
space (actually a $\Vs^{\alpha,\beta}$ space) for suitable
values of $\alpha$, $\beta$. We start by stating the following lemma.
\begin{lemma}\label{l:bound12}
  Assume $\frac{\chi_0}\tau<1$.
  For every $\alpha<2\tau-\chi_1$, every $\beta>\beta_0(\alpha)-1$,
  every $p\geq1$, and every $T>0$, there is a number $c_T>0$
  such that
  \[
    \sup_{t\in[0,T]}\E[(t^\beta\|\etaunodue\|_\alpha)^p]
      \leq c_T.
  \]
  Moreover, $c_T\to0$ as $T\downarrow0$.
\end{lemma}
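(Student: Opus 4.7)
The plan is to transfer regularity from the heat kernel onto the singularity of $\etauno$, so that the bounds in Lemma~\ref{l:bound1} can be directly integrated. Fix an auxiliary parameter $\kappa\in[0,\tau)$ and set $\alpha_1\eqdef\alpha-\kappa$. Applying the Schauder estimate \eqref{e:schauder_assumption} inside the defining integral for $\etaunodue$ gives
\[
  \|\etaunodue(t)\|_\alpha
    \leq c\int_0^t (t-s)^{-\kappa/\tau}\|\etauno(s)\|_{\alpha_1}\,ds.
\]
Since $\etaunodue$ is a deterministic linear functional of $\etauno$, Minkowski's integral inequality lets me move the $L^p(\Omega)$ norm inside. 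Invoking Lemma~\ref{l:bound1} at regularity $\alpha_1$ then provides $\E[\|\etauno(s)\|_{\alpha_1}^p]^{1/p}\leq c\,s^{-\beta_1}$ for any $\beta_1>\beta_0(\alpha_1)$, and provided also $\kappa/\tau<1$ and $\beta_1<1$, a standard beta-function computation yields
\[
  \E[\|\etaunodue(t)\|_\alpha^p]^{1/p}
    \leq c\,t^{1-\kappa/\tau-\beta_1}.
\]
Multiplying by $t^\beta$ leaves the exponent $\beta+1-(\kappa/\tau+\beta_1)$, which needs to be made strictly positive to produce both a finite $c_T$ and the vanishing $c_T\downarrow0$ as $T\downarrow0$.

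The main obstacle is the parameter balancing. Since
\[
  \beta_0(\alpha-\kappa)
    = \Bigl(\frac{\alpha-\kappa+\chi_1}\tau\Bigr)
      \vee\Bigl(\frac{\chi_0}\tau\Bigr)_+,
\]
the sum $\kappa/\tau+\beta_0(\alpha-\kappa)$ is always at least $\beta_0(\alpha)$, and it can be pushed arbitrarily close to $\beta_0(\alpha)$ by a suitable choice of $\kappa$: enlarging $\kappa$ shifts weight from $\beta_1$ to $\kappa/\tau$ without changing the total, as long as the first branch of the maximum defining $\beta_0$ dominates. If $\beta_0(\alpha)\leq1$ one can just take $\kappa=0$, and the hypothesis $\chi_0/\tau<1$ handles the case where the $(\chi_0/\tau)_+$-branch is active. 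If $\beta_0(\alpha)>1$ one must split the total value into two pieces $\kappa/\tau\in(0,1)$ and $\beta_1\in(0,1)$; this split is available precisely because $\alpha<2\tau-\chi_1$ forces $\beta_0(\alpha)<2$, so that $\alpha+\chi_1-\tau<\tau$ and any $\kappa$ in the (non-empty) interval $(\alpha+\chi_1-\tau,\tau)$ does the job.

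In every case, the assumption $\beta>\beta_0(\alpha)-1$ provides the strict slack needed for $\kappa/\tau+\beta_1<\beta+1$, so the uniform bound
\[
  \sup_{t\in[0,T]}\E[(t^\beta\|\etaunodue(t)\|_\alpha)^p]^{1/p}
    \leq c\,T^{\beta+1-\kappa/\tau-\beta_1}
\]
follows, with a strictly positive exponent ensuring $c_T\to0$ as $T\downarrow0$. The informal $\Vs^{\alpha,\beta}$-statement in the paragraph following the lemma is then an immediate consequence of the same bound, combined with the almost-sure finiteness of the $L^p$ moments through hyper-contractivity in the second Wiener chaos.
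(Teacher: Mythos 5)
The paper states Lemma~\ref{l:bound12} without proof, and your argument---using the Schauder estimate \eqref{e:schauder_assumption} to trade $\kappa$ units of spatial regularity for a kernel $(t-s)^{-\kappa/\tau}$, Minkowski's integral inequality, Lemma~\ref{l:bound1} at level $\alpha_1=\alpha-\kappa$, and a beta-function integral in time---is precisely the route the surrounding machinery intends, and it is correct. One minor imprecision in the parameter balancing: it is not true that \emph{any} $\kappa\in(\alpha+\chi_1-\tau,\tau)$ does the job, since one must additionally keep $\kappa/\tau+(\chi_0/\tau)_+<\beta+1$ (and in the borderline case $\beta_0(\alpha)=1$ the choice $\kappa=0$ fails, because Lemma~\ref{l:bound1} would then force $\beta_1>1$ and destroy integrability of $s^{-\beta_1}$ at $0$, so a small positive $\kappa$ is needed); however, since $(\alpha+\chi_1)/\tau\leq\beta_0(\alpha)<\beta+1$ and $(\chi_0/\tau)_+<1$ by hypothesis, the two constraints on $\kappa$ are simultaneously satisfiable, so your conclusion and the vanishing of $c_T$ as $T\downarrow0$ stand as written.
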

\begin{theorem}\label{t:eta12_reg}
  Under Assumptions~\ref{a:linear_diagonal}, \ref{a:nonlin},
  and \ref{a:u0}, if $\frac{\chi_0}\tau<1$, then
  $\etaunodue\in\Xs^{\alpha,\beta}_T$
  for all $\alpha<2\tau-\chi_1$, all $\beta>\beta_0(\alpha)-1$,
  and all $T>0$. Moreover, for every $p\geq1$ and for the same
  values of $\alpha,\beta$,
  \[
    \E[\|\etaunodue\|_{\alpha,\beta,T}^p]
      <\infty.
  \]
  In particular, $\etaunodue\in\Vs^{\alpha,\beta}$, {a.~s.}, for the
  same values of $\alpha$, $\beta$.
\end{theorem}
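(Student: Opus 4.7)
My strategy is to bootstrap the pointwise-in-time moment bound of Lemma~\ref{l:bound12} into an almost-sure statement about the weighted supremum norm by combining (i) hypercontractivity in the second Wiener chaos, (ii) a Hölder-in-time estimate for $\etaunodue$, and (iii) a Kolmogorov continuity argument on dyadic time intervals. The $\Vs^{\alpha,\beta}$ conclusion will then follow by exploiting the open inequality $\beta > \beta_0(\alpha)-1$.

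\emph{Step 1: All-$L^p$ pointwise bounds.} Lemma~\ref{l:bound12} already gives, via hypercontractivity in the second Wiener chaos (as in Lemma~\ref{l:bound1}), the all-$L^p$ pointwise estimate
\[
  \sup_{t\in[0,T]}\E\bigl[(t^{\beta}\|\etaunodue(t)\|_\alpha)^p\bigr]
    \leq c_{T,p},
\]
with $c_{T,p}\to 0$ as $T\downarrow 0$, for every $\alpha<2\tau-\chi_1$ and every $\beta>\beta_0(\alpha)-1$.

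\emph{Step 2: Hölder-in-time estimate.} Pick $\alpha'\in(\alpha,2\tau-\chi_1)$ and set $\epsilon=\alpha'-\alpha>0$. For $0<s<t$, decompose
\[
  \etaunodue(t)-\etaunodue(s)
    = (\e^{(t-s)A}-I)\etaunodue(s)
      + \int_s^t \e^{(t-r)A}\etauno(r)\,dr.
\]
Using the interpolation inequality $\|(\e^{hA}-I)f\|_\alpha\lesssim h^{\epsilon/\tau}\|f\|_{\alpha+\epsilon}$ (a consequence of Assumption~\ref{a:schauder_assumption}) for the first term, and Assumption~\ref{a:schauder_assumption} together with Lemma~\ref{l:bound1} for the second, one obtains a bound of the form
\[
  \E\bigl[\|\etaunodue(t)-\etaunodue(s)\|_\alpha^p\bigr]
    \lesssim (t-s)^{\kappa p} s^{-\bar\beta p},
\]
for some $\kappa>0$ and some $\bar\beta>\beta_0(\alpha')-1$. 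The right-hand side is a genuine Hölder bound once one restricts $s,t$ to a dyadic interval bounded away from $0$.

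\emph{Step 3: Dyadic Kolmogorov argument.} Partition $(0,T]$ into dyadic pieces $I_n=(2^{-n-1}T,2^{-n}T]$. On each $I_n$, combine the pointwise bound of Step 1 at the left endpoint with the Hölder-in-time bound of Step 2 and apply the Kolmogorov--Chentsov criterion to obtain, for $p$ large enough,
\[
  \E\Bigl[\sup_{t\in I_n} \bigl(t^{\beta}\|\etaunodue(t)\|_\alpha\bigr)^p\Bigr]
    \leq C\,(2^{-n}T)^{\eta p}
\]
for some $\eta>0$ (after possibly lowering $\beta$ slightly, still keeping $\beta>\beta_0(\alpha)-1$). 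Summing over $n\geq 0$ the geometric series gives $\E[\|\etaunodue\|_{\alpha,\beta,T}^p]<\infty$ for all $p\geq 1$, whence $\etaunodue\in\Xs^{\alpha,\beta}_T$ almost surely.

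\emph{Step 4: Membership in $\Vs^{\alpha,\beta}$.} Since the range of admissible exponents is open, for any $\beta>\beta_0(\alpha)-1$ I can pick $\beta'\in(\beta_0(\alpha)-1,\beta)$ and apply Step 3 with $\beta'$ to get $\|\etaunodue\|_{\alpha,\beta',T}<\infty$ almost surely. Then
\[
  \|\etaunodue\|_{\alpha,\beta,T}
    \leq T^{\beta-\beta'}\|\etaunodue\|_{\alpha,\beta',T}
    \xrightarrow[T\downarrow 0]{} 0\quad\text{a.\,s.},
\]
which is precisely the vanishing property defining $\Vs^{\alpha,\beta}$.

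\emph{Main obstacle.} The delicate point is Step 2: one must spend a sliver of spatial regularity ($\alpha\leadsto\alpha+\epsilon$) in order to buy a positive Hölder exponent in time, and this must be done while keeping $\alpha+\epsilon<2\tau-\chi_1$ and the induced exponent $\bar\beta$ below $\beta$. The strict inequality in the hypothesis guarantees that such an $\epsilon$ exists, but the book-keeping between the spatial loss, the temporal gain, and the weight $t^\beta$ in the dyadic summation is where the argument needs to be executed carefully.
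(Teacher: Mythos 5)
Your proposal is correct in substance and reaches the result by a route that differs from the paper's in its packaging, though it rests on the same two pillars: the all-moment pointwise bounds of Lemma~\ref{l:bound12} (via hypercontractivity) and an increment-in-time estimate obtained by trading a sliver of spatial regularity through the semigroup. The paper works directly with the weighted process $t\mapsto t^\beta\etaunodue_t$ on all of $[0,T]$, shows it lies in $W^{\gamma,p}([0,T];\Cs^\alpha)$ with $\gamma p>1$, and concludes by Sobolev embedding; the bookkeeping there is the system of constraints \eqref{e:cond_case1}--\eqref{e:cond_case2} on $(\gamma,1/p)$, whose compatibility is checked in Figure~\ref{f:case1}, and the increment is split as $(t^\beta-s^\beta)\etaunodue_t+s^\beta(\etaunodue_t-\etaunodue_s)$ so that the weight is handled globally. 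You instead strip off the weight by localizing on dyadic intervals $I_n=(2^{-n-1}T,2^{-n}T]$, run Kolmogorov--Chentsov on each block, and sum a geometric series; this buys you a more elementary continuity criterion and lets you exploit $t-s\leq s$ on $I_n$ to absorb the strong singularity $r^{-\beta''}$, $\beta''>\beta_0(\alpha)$ (which may be close to $2$), coming from the term $\int_s^t\e^{(t-r)A}\etauno(r)\,dr$ estimated via Lemma~\ref{l:bound1} and Minkowski. The price is that your Step 2, which you rightly flag as the delicate point, is exactly the analogue of \eqref{e:cond_case1}--\eqref{e:cond_case2} and is left schematic; it does close, because in the worst regime $\beta_0(\alpha+\epsilon)-1=\beta_0(\alpha)-1+\epsilon/\tau$, so taking the Hölder exponent $\kappa=\epsilon/\tau$ the spatial loss is exactly compensated and the geometric decay rate reduces to the strictly positive margin $\beta-(\beta_0(\alpha)-1)$, with $\alpha+\epsilon<2\tau-\chi_1$ available since the range of $\alpha$ is open. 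Your Step 4 (lowering $\beta$ and using the factor $T^{\beta-\beta'}$) is the same monotonicity reduction the paper invokes at the start of its proof and is the cleanest way to get the $\Vs^{\alpha,\beta}$ statement, which the paper itself leaves implicit.
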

\begin{proof}
  Notice preliminarily that it is sufficient to prove the
  statement when $\beta$ is close to $\beta_0(\alpha)-1$,
  since for $\epsilon>0$, $\|\cdot\|_{\alpha,\beta+\epsilon,T}
  \leq T^\epsilon\|\cdot\|_{\alpha,\beta,T}$.

  Our strategy to prove the theorem is to find $\gamma\in(0,1)$
  and $p\geq1$ such that $\gamma p>1$ and $t\mapsto t^\beta\etaunodue_t$
  is in $W^{\gamma,p}([0,T];\Cs^\alpha)$ (with all moments). By
  Sobolev's embeddings, this concludes the proof of the theorem.

  It follows from Lemma~\ref{l:bound12} that
  \[
    \E\Bigl[\int_0^T \|t^\beta\etaunodue\|_\alpha^p\,dt\Bigr]
      \leq c_T
  \]
  for all $\alpha<2\tau-\chi_1$ and $\beta>\beta_0(\alpha)-1$.
  It remains to analyze the increments.

  \underline{\textsl{Case 1}}.
  Consider first the case $\tau-\chi_1\leq\alpha<2\tau-\chi_1$. Here we have
  $\beta_0(\alpha)-1\in[0,1)$, so in view of the initial remark, it is
  not restrictive to assume that $\beta\in(\beta_0(\alpha)-1,1)$.
  Let $s\leq t\leq T$, then
  \[
    t^\beta\etaunodue_t - s^\beta\etaunodue_s
      = (t^\beta-s^\beta)\etaunodue_t + s^\beta
        (\etaunodue_t - \etaunodue_s).
  \]
  Consider the first term. It is elementary to see that for $\lambda\in[0,1]$
  (the case $\lambda=0$ is obvious, the case $\lambda=1$ follows by Taylor
  expansion, the intermediate cases by interpolation),
  \[
    t^\beta-s^\beta
      \lesssim t^{(1-\lambda)\beta}s^{(\beta-1)\lambda}(t-s)^\lambda,
  \]
  thus by Lemma~\ref{l:bound12},
  \[
    \begin{aligned}
      \E\int_0^T\int_0^t\frac{\|(t^\beta-s^\beta)\etaunodue_t\|_\alpha^p}
          {|t-s|^{1+\gamma p}}\,ds\,dt
        &\lesssim\int_0^T\int_0^t\frac{t^{\beta p(1-\lambda)-\beta_1p}
          s^{\lambda  p(\beta-1)}}{|t-s|^{1+(\gamma-\lambda)p}}
          \E[(t^{\beta_1}\|\etaunodue_t\|_\alpha)^p]\,ds\,dt\\
        &\lesssim\int_0^T t^{\beta p(1-\lambda)-\beta_1p}
          \int_0^t\frac{ds}{s^{\lambda p(1-\beta)}
          |t-s|^{1+(\gamma-\lambda)p}}\,dt\\
        &\lesssim\int_0^T t^{-p(\gamma+\beta_1-\beta)}\,dt,
  \end{aligned} 
  \]
  where $\beta_1\in(\beta_0(\alpha)-1,\beta)$, and we need
  $\lambda p(1-\beta)<1$, $1+(\gamma-\lambda)p<1$,
  and $p(\gamma+\beta_1-\beta)<1$. In the limit
  $\lambda\downarrow\gamma$ and $\beta_1\downarrow\beta_0(\alpha)-1$,
  we obtain the two conditions
  \begin{equation}\label{e:cg1}
    \gamma p(1-\beta)<1,
    \qquad
    p\bigl(\gamma - (\beta+1-\beta_0(\alpha))\bigr)<1.
  \end{equation}
  Similar considerations applied to the second term
  yield additional conditions on $\gamma$, $p$.
  These can be summarized as
  follows: given $\beta\in(\beta_0(\alpha)-1,1)$, find
  $\gamma\in(0,1)$ and $p\geq1$ such that
  \begin{equation}\label{e:cond_case1}
    \begin{gathered}
      \gamma
        <\tfrac1{1-\beta}\tfrac1p,\qquad
      \gamma-\tfrac1p
        <\beta+1-\beta_0(\alpha),\qquad
      \gamma-\tfrac1p
        >0,\\
      \gamma
        <2-\beta_0(\alpha),\qquad
      \frac1p
        > \bigl(\tfrac{\chi_0}\tau\bigr)_+ - \beta.
    \end{gathered}
  \end{equation}
  Notice that by the choice of $\beta$, we have that
  $\beta+1-\beta_0(\alpha)<2-\beta_0(\alpha)$ and
  $(\chi_0/\tau)_+ - \beta<1-\beta<2-\beta_0(\alpha)$.
  Figure~\ref{f:case1} shows the non--empty area of all values of
  $\gamma$ and $1/p$ that meet all the requirements.
  \begin{figure}[!h]
    \begin{tikzpicture}[x=60mm,y=40mm,thick]
      \draw[->] (0,0) -- (0,1.1) node [left] {\Tiny $\gamma$};
      \draw[->] (0,0) -- (1.1,0) node [below] {\Tiny $\frac1p$};
      \draw[dotted] (0,1) node [left] {\Tiny $1$} -- (1,1);
      \draw[dotted] (1,0) node [below] {\Tiny $1$} -- (1,1);
      % \gamma < \frac1{1-\beta}\frac1p
      \draw[dashed] (0,0) -- (0.45,1);
      \draw[dotted] (0.45,0) node [left,rotate=90] {\Tiny $1-\beta$} -- (0.45,1);
      \fill[pattern=north east lines,pattern color=red!70!black] (0,0) -- (0.45,1) -- (0,1);
      % \gamma-\frac1p < \beta+1-\beta_0(\alpha)
      \draw[dashed] (0,0.2) node [left] {\Tiny $\beta+1-\beta_0(\alpha)$} -- (0.8,1);
      \fill[pattern=north west lines, pattern color=green!70!black] (0,0.2) -- (0.8,1) -- (0,1);
      % \gamma-\frac1p >0
      \draw[dashed] (0,0) -- (1,1);
      \fill[pattern=north west lines,pattern color=orange] (0,0) -- (1,1) -- (1,0);
      % \gamma < 2-\beta_0(\alpha)
      \draw[dashed] (0,0.75) node [left] {\Tiny $2-\beta_0(\alpha)$} -- (1,0.75);
      \draw[dotted] (0.75,0) node [left,rotate=90] {\Tiny $2-\beta_0(\alpha)$} -- (0.75,1);
      \fill[pattern=vertical lines,pattern color=purple] (0,0.75) -- (0,1) -- (1,1) -- (1,0.75);
      % \frac1p > \bigl(\tfrac{\chi_0}\tau\bigr)_+ - \beta
      \draw[dashed] (0.25,0) node [left,rotate=90] {\Tiny $\bigl(\tfrac{\chi_0}\tau\bigr)_+ - \beta$} -- (0.25,1);
      \fill[pattern=horizontal lines,pattern color=blue] (0.25,0) -- (0.25,1) -- (0,1) -- (0,0);
    \end{tikzpicture}
    \caption{The white area contains all values $(\gamma,1/p)$ that satisfy \eqref{e:cond_case1}.}
    \label{f:case1}
  \end{figure}
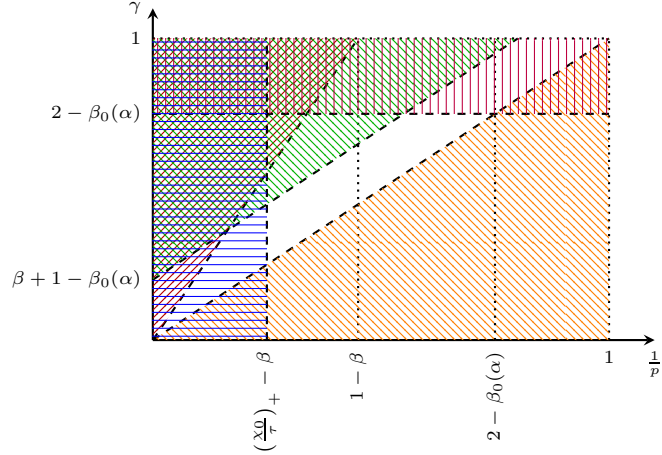
  
  \underline{\textsl{Case 2}}.
  Assume $\alpha<\tau-\chi_1$, then $\beta_0(\alpha)<1$ and, due to the initial remark,
  we can assume $\beta<0$. This time we decompose the increment as
  \[
    t^\beta\etaunodue_t - s^\beta\etaunodue_s
      = t^\beta(\etaunodue_t - \etaunodue_s)
        + (t^\beta-s^\beta)\etaunodue_s.
  \]
  Similar estimates as above yield the following conditions
  on $p$, $\gamma$:
  given $\beta\in(\beta_0(\alpha)-1,0)$, find
  $\gamma\in(0,1)$ and $p\geq1$ such that
  \begin{equation}\label{e:cond_case2}
    \beta_0(\alpha)p
      < 1,\qquad
    p\bigl(\gamma - (\beta+1-\beta_0(\alpha))\bigr)
      < 1.\qedhere
  \end{equation}
\end{proof}
%%
%%%%%%%%%%%%%%%%%%%%%%%%%%%%%
\section{Additional examples}

%%
%%%%%%%%%%%%%%%%%%%%%%%%%%%%%%%%%%%%%%%%%%%%%%%%%%%%%%%%%%%%%%
\subsection{Non--symmetric nonlinearity}\label{s:asymmetric}

Our Assumption~\ref{a:nonlin} (as well as Assumption~\ref{a:nonlinear_assumption}
in the case of an optimal inequality) means essentially that $B$ is, at small scales,
like $D^a((D^b u)^2)$.
If this is not the case, the inequalities on which we base our
analysis are not optimal and the results are at most as good as those in the
symmetric case (that is, the critical level might not be achieved, even with
random initial conditions). A first order expansion though is still sufficient.

Consider for instance the following one--dimensional problem
\[
  du
    = Au + uu_{xx}.
\]
We can write $uu_{xx}=\frac12(u^2)_{xx}-(u_x)^2$, and notice that
the three terms $B(u,u)\eqdef uu_{xx}$, $B_1(u,u)\eqdef(u^2)_{xx}$,
and $B_2(u,u)=(u_x)^2$ scale with the same scaling, with $\sigma=\tau-2$.
Thus, using the theory detailed in these pages, we can solve the
problem in $\Xs^{1,\beta}_T$, for a suitable $\beta$. This is an
optimal choice for $B_2$, but not for $B_1$. This discrepancy
explains the non--optimal results in such cases.
%%
%%%%%%%%%%%%%%%%%%%%%%%%%%%%%%%%%%%%%%%%%%%%%%%%%%%%%%%%%%%%%%%
\subsection{The case without mass conservation}\label{s:nomass}

We have worked so far under the assumption of mass conservation,
namely that the solution averages to zero in the spatial domain.
In this section we wish to briefly show that the general case
follows likewise, without too much hassle when 
mass conservation does not hold.

Consider $B$ quadratic, and let $U$ be solution of
\[
  \partial_t U
    = AU + B(U,U).
\]
Decompose $U=\xi+u$, where $\xi$ is the space average of $U$,
and $u$ has spatial mean zero. Recall that $\Mc$ is the projection
onto the zero mass space, so that $u = \Mc U$. 
Assume we work under Assumptions~\ref{a:linear_diagonal}
and~\ref{a:nonlin} (this time including the zero modes), then the
equations for $u$ and $\xi$ are
\[
  \begin{cases}
    \dot\xi
      = \Mc^\perp B(u,u) + B(\xi,\xi),\\
    \partial_t u
      = Au + \Mc B(u,u) + 2\Mc B(u,\xi),
  \end{cases}
\]
since $\Mc B(\xi,\xi)=0$ and $\Mc^\perp B(u,\xi)=0$.

We first notice that, if the initial condition has
infinite mean (this is for instance the case of a
distribution), there is in general no hope to
have a finite mean at positive times. We thus
consider in the rest of this section the case
of an initial condition with finite mean.

Assume, to fix ideas, that the numbers
$a,b$ are integers. We notice that if $a\geq1$, then $\Mc B=B$
and $\Mc^\perp B=0$, while if $b\geq1$ then $B(\xi,\cdot)=0$.
Moreover, $\Mc B(\Mc\cdot,\Mc\cdot)$ satisfies our original
Assumption~\ref{a:nonlin}(that is, a nonlinearity that
preserves the mass). We have three cases.
\begin{itemize}
  \item If $a\geq1$, then $\xi$ is a finite constant (in space
    and time) and the equation of $u$ is of the same kind we
    have studied so far, with the addition of the term of
    lower order $\Mc B(u,\xi)$ that does not change our
    analysis.
  \item If $a=0$, $b\geq1$, the equation for $u$ decouples from $\xi$,
    and is of the same kind we have studied so far. Once $u$ is
    known, then $\xi$ can be computed by the equation $\dot\xi=\Mc^\perp B(u,u)$. An additional difficulty is
    that if we solve the problem for $u$ in $\Xs_T^{\beta,b}$,
    then we cannot ensure that $\Mc^\perp B(u,u)$ is well
    defined. Indeed, for instance in the one--dimensional
    case (this is only to avoid ambiguity in the understanding
    of the generic term $D^b$),
    \[
      \Mc^\perp B(u,u)
        \sim \Bigl(\sum_{m+n=0} m^b n^b u_m u_n\Bigr)e_0
        \sim \|u\|_{H^b}^2,
    \]
    and $\Cs^b=B^b_{\infty,\infty}$ is not in $H^b=B^b_{2,2}$.
  \item Likewise, if $a=b=0$, the equation for $u$ contains
    the lower order term $\Mc B(u,\xi)$, while the equation
    for $\xi$ contains the polynomial term $B(\xi,\xi)$
    and again
    \[
      \Mc^\perp B(u,u)
        = \Bigl(\sum_{m+n=0}u_m u_n\Bigr)e_0
        \sim \|u\|_{L^2}
    \]
    with $L^2=B^0_{2,2}$.
\end{itemize}
In the last two cases a easy workaround is to solve the
problem in $\Xs^{b+\epsilon,\beta}_T$, since for $\alpha\geq0$,
$p\geq1$, and $\epsilon>0$,
\[
  \Cs^{\alpha+\epsilon}
    = B^{\alpha+\epsilon}_{\infty,\infty}
    \subset B^\alpha_{2,2}.
\]
%%
%%%%%%%%%%%%%%%%%%%%%%%%%%%%%%%%%%%%%%%%%%%%%%%%%%%%%%%%%%%%%
\subsection{Higher powers in the nonlinearity}\label{s:cubic}

The overall picture provided by quadratic nonlinearities
does not change for non--linear terms with higher powers.
Indeed, assume $B$ is $m$--linear, with $m>2$, then
under an assumption analogous to
\eqref{e:nonlinear_assumption}, we see that if
$\delta>\frac1m$ then Theorem~\ref{t:fix1} is enough
for initial conditions up to (and including) the
critical space. If $\delta<\frac1m$ the random initial
condition method becomes effective and allows to solve the
initial value problem for rougher initial conditions
(but not as rough as the critical space in general).
We observe that also in the multi--linear case a first
order expansion is sufficient, because the method fails
for integrability of the analogous of $\etauno$ before
failing due to the smallness of (the analogous of) $\etaunodue$
in a suitable space.

Likewise, if we relax the condition of mass conservation
we can still solve the problem without having divergences
(so in the language of \cite{Hai2013}, there is no need to
include renormalization in the analysis).
%%
%%%%%%%%%%%%%%%%%%%%%%%%%%%%%%%%%%%%%%%%%%%%%%
\subsection{A counterexample}\label{s:counter}

Consider the following problem on $[-\pi,\pi]$ with
periodic boundary conditions, and zero mean,
\begin{equation}\label{e:ex1}
  \begin{cases}
    \partial_t u
      = u_{xx} + (u\star u)_x,
        \qquad x\in[-\pi,\pi],t\geq0,\\
    u\text{ is a odd function},
  \end{cases}
\end{equation}
where $\star$ denotes convolution on $(-\pi,\pi)$.
The equation has scaling invariance, with
$\tau=2$, $\sigma=2$, thus the critical space
is at the level of $\Cs^{-2}$.

In the rest of this section we show that we can find
(infinitely many) Gaussian
initial conditions $\Xi$ that are in $\Cs^{-\frac32-}$
{a.\,s.}, but such that there is no solution of the
above problem with initial condition $\Xi$.
\bigskip

The problem has a very simple formulation in Fourier
coordinates. A mean zero periodic odd function on
$[-\pi,\pi]$ has the Fourier expansion
\[
  u(x)
    = \sum_{k\in\Z}u_k\e^{\im kx}
    = 2\sum_{k=1}^\infty \xi_k\sin kx,
\]
with $u_k=-\im\xi_k$, $\xi_k\in\R$,
and $\xi_{-k}=-\xi_k$ for all $k$.
The equation, in terms of the new variables
$(\xi_k)_{k\geq1}$, is
\[
  \frac{d}{dt}\xi_k
    = -k^2\xi_k + k\xi_k^2,
      \qquad k\geq1.
\]
Each equation can be explicitly integrated, and by this one can easily see that
each component $\xi_k$ may blow up at the finite time
\begin{equation}\label{e:times}
  \tau_k
    = -\frac1{k^2}\log\Bigl(1 - \frac{k^2}{k\xi_k(0)}\Bigr)
\end{equation}
and we set $\tau_k=\infty$ if
the argument in the logarithm in
\eqref{e:times} is negative, or when the formula for $\tau_k$
gives a negative number. Elementary computations show that
$\tau_k<\infty$ when $\xi_k(0)>k$.

We have the following trichotomy
\begin{itemize}
  \item $\inf_{k\geq1}\tau_k = 0$: no local existence
    for \eqref{e:ex1},
  \item $\inf_{k\geq1}\tau_k>0$ and finite: local existence
    for \eqref{e:ex1},
  \item $\inf_{k\geq1}\tau_k=\infty$: global existence for
    \eqref{e:ex1}.
\end{itemize}
In view of the probabilistic argument, we notice that
$\inf_k\tau_k>0$ if and only if there is $\epsilon>0$
such that $\tau_k\geq\epsilon$ eventually.
%%
%%%%%%%%%%%%%%%%%%%%%%%%%%%%%%%%%%%%%
\subsubsection{Random initial condition}

We consider as initial condition a Gaussian random
field $\Xi(x)=\sum_{k\geq1}\xi_k\sin kx$ with
independent $\xi_k$ with Gaussian law
$\mathcal{N}(0,\sigma_k^2)$.
\begin{lemma}
  If there are $\lambda>\sqrt2$ and $\epsilon>0$
  such that
  \[
    \sigma_k
      \leq \frac{k}{\lambda\sqrt{\log k}(1-\e^{-\epsilon k^2})},
        \qquad k\geq1,
  \]
  then $\inf_{k\geq1}\tau_k>0$, a.\,s. for the problem
  with initial condition $-\im\Xi$.
  Moreover $\Xi\in \Cs^{-\frac32-}$.
\end{lemma}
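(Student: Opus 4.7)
The statement splits into two essentially independent claims, both of which reduce to standard tools once the right translation is made.

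For the first claim, the key is to convert the event $\{\tau_k<\epsilon\}$ into a Gaussian tail event. From the explicit formula \eqref{e:times}, one checks directly that, for any fixed $\epsilon>0$,
\[
  \{\tau_k<\epsilon\}
    = \Bigl\{\xi_k(0) > \frac{k}{1-\e^{-\epsilon k^2}}\Bigr\},
\]
because $\tau_k$ is finite only when $\xi_k(0)>k$ and the logarithm in \eqref{e:times} is a monotone function of $\xi_k(0)$ on that range. Since $\xi_k\sim\mathcal{N}(0,\sigma_k^2)$, the standard Gaussian tail bound $\Prob[\xi_k>t]\leq\exp(-t^2/(2\sigma_k^2))$ combined with the assumed upper bound on $\sigma_k$ gives
\[
  \frac{t^2}{2\sigma_k^2}
    \geq \frac{\lambda^2\log k}{2}
      \qquad\text{with}\qquad t = \frac{k}{1-\e^{-\epsilon k^2}},
\]
where the two factors $(1-\e^{-\epsilon k^2})^{-1}$ in $t$ and $\sigma_k$ cancel and the $\sqrt{\log k}$ is exactly what turns the exponential into a power. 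Therefore $\Prob[\tau_k<\epsilon]\leq k^{-\lambda^2/2}$, and since $\lambda>\sqrt2$ implies $\lambda^2/2>1$, the series $\sum_k\Prob[\tau_k<\epsilon]$ is summable. By the first Borel--Cantelli lemma, almost surely $\tau_k\geq\epsilon$ for all but finitely many $k$. For the remaining finitely many indices each $\tau_k$ is a.s. strictly positive (either $+\infty$, or the blow-up time of an ODE with finite initial datum), so $\inf_k\tau_k>0$ a.s.

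The second claim is a direct application of the regularity result for Gaussian Fourier series recalled in Section~\ref{s:randreg}. The random field $\Xi$ fits Assumption~\ref{a:u0} on the one--dimensional torus with $\phi_k=\sigma_k$, and the hypothesis gives $|\phi_k|\lesssim |k|/\sqrt{\log k}\lesssim |k|^{1}$, so the effective growth exponent is $\theta=1$ (the $\sqrt{\log k}$ is irrelevant at the Hölder scale). With $d=1$, formula \eqref{e:u0reg} and the discussion after it yield $\Xi\in\Cs^\alpha$ a.s. for every $\alpha<-\theta-d/2=-3/2$, i.e. $\Xi\in\Cs^{-3/2-}$.

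The only delicate point is the first step, and specifically the observation that the peculiar--looking factor $(1-\e^{-\epsilon k^2})$ in the hypothesis on $\sigma_k$ is chosen precisely to cancel the same factor arising in the threshold $k/(1-\e^{-\epsilon k^2})$ for $\tau_k<\epsilon$; once this cancellation is visible, the rest is routine Borel--Cantelli plus the general Gaussian regularity lemma.
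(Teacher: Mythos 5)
Your proof is correct. The first half is essentially the paper's argument verbatim: the identity $\{\tau_k\leq\epsilon\}=\{\xi_k\geq k/(1-\e^{-\epsilon k^2})\}$, the cancellation of the $(1-\e^{-\epsilon k^2})$ factors against the hypothesis on $\sigma_k$, the Gaussian tail bound giving $\Prob[\tau_k\leq\epsilon]\lesssim k^{-\lambda^2/2}$, summability from $\lambda>\sqrt2$, and Borel--Cantelli; your explicit remark that the finitely many exceptional $\tau_k$ are a.s.\ positive is implicit in the paper (via the observation preceding the lemma that $\inf_k\tau_k>0$ iff $\tau_k\geq\epsilon$ eventually), so no gap there. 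For the regularity of $\Xi$ you take a genuinely different route: the paper applies Kolmogorov's continuity theorem to $E=(-\Delta)^{-1}\Xi$, showing $\E[|E(x)-E(y)|^2]\lesssim|x-y|$ and hence $E\in\Cs^{\frac12-}$, $\Xi\in\Cs^{-\frac32-}$, whereas you reuse the Kahane--Borel--Cantelli Littlewood--Paley estimate \eqref{e:u0reg} with $\phi_k=\sigma_k\lesssim k$, i.e.\ $\theta=1$, $d=1$. This is legitimate --- the derivation of \eqref{e:u0reg} only uses the upper bound on the weights, even though Assumption~\ref{a:u0} states a two-sided condition $|\phi_k|\sim|k|^\theta$, and it is worth saying so explicitly when you invoke it. Your route has the advantage of recycling machinery already set up in the paper (and the $\sqrt{\log k}$ gain would even give a small logarithmic refinement of $\Cs^{-\frac32-}$ if one cared), while the paper's Kolmogorov computation is self-contained and makes the role of the constants $\lambda$, $\epsilon$ in the variance bound visible; both yield the stated conclusion.
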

\begin{proof}
  The first part follows immediately by a Borel--Cantelli argument,
  since
  \[
    \sum_{k=1}^\infty \Prob[\tau_k\leq\epsilon]<\infty.
  \]
  Indeed,
  \[
    \begin{multlined}[.9\linewidth]
      \Prob[\tau_k\leq\epsilon]
        = \Prob\Bigl[\xi_k\geq\frac{k}{(1-\e^{-\epsilon k^2})}\Bigr]
        = \Prob\Bigl[Z\geq\frac{k}{\sigma_k(1-\e^{-\epsilon k^2})}\Bigr] \leq\\
        \leq \Prob[Z\geq\lambda\sqrt{\log k}]
        \lesssim \frac1{\lambda\sqrt{\log k}}\e^{-\frac12\lambda^2\log k}
        = \frac1{\lambda k^{\frac12\lambda^2}\sqrt{\log k}},
    \end{multlined}
  \]
  where $Z$ is a real standard Gaussian random variable.
  Therefore the series above converges since $\frac12\lambda^2>1$ by
  the choice of $\lambda$.

  To prove that $\Xi\in \Cs^{-\frac32-}$ we use Kolmogorov's continuity
  theorem. Indeed, let $E=(-\Delta)^{-1}\Xi$ (notice that the Laplace
  operator is invertible on the subspace of zero mean functions),
  then
  \[
    \begin{multlined}[.95\linewidth]
    \E[|E(x) - E(y)|^2]
      = \sum_{k=1}^\infty \frac{\sigma_k^2}{k^4}(\sin kx-\sin ky)^2\leq\\
      \leq \frac1{\lambda^2(1-\e^{-\epsilon})^2}\sum_{k=1}^\infty\frac{(1\wedge k|x-y|)^2}{k^2\log k}
      \lesssim |x-y|.
    \end{multlined}
  \]
  Since $E$ is Gaussian, we deduce that $E\in \Cs^{\frac12-}$ and therefore
  $\Xi\in \Cs^{-\frac32-}$.
\end{proof}
On the other hand, with the same regularity, we can show
an initial condition that gives non--existence.
\begin{lemma}
  Set
  \[
    \sigma_k
      = \frac{k}{\sqrt{2\log k}(1 - \e^{-k^2\epsilon_k})},
  \]
  with $\epsilon_k\downarrow0$. Then $\tau_k\leq\epsilon_k$ infinitely
  often, a.\,s. In particular $\inf_k \tau_k=0$ a.\,s. and there
  is no solution with initial condition $-\im\Xi$ with probability
  one. Moreover, if $\inf_k k^2\epsilon_k>0$, then
  $\Xi\in \Cs^{-\frac32-}$, a.\,s. 
\end{lemma}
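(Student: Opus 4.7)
The strategy is the exact mirror of the previous lemma, but with a Borel--Cantelli argument in the opposite direction. The event $\{\tau_k\le\epsilon_k\}$ is, by the explicit formula \eqref{e:times}, equivalent to $\xi_k(0)\ge k/(1-\e^{-k^2\epsilon_k})$. Writing $\xi_k(0)=\sigma_k Z$ with $Z$ a real standard Gaussian and substituting the specific choice of $\sigma_k$, the threshold becomes exactly $\sqrt{2\log k}$, so that
\[
  \Prob[\tau_k\le\epsilon_k]
    = \Prob\bigl[Z\ge\sqrt{2\log k}\bigr].
\]
The standard Gaussian tail asymptotics then give $\Prob[\tau_k\le\epsilon_k]\sim (k\sqrt{\log k})^{-1}$ up to a universal constant, which is the sharp borderline and is chosen precisely to make the series divergent.

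Since the Fourier coefficients $(\xi_k)_{k\ge1}$ are independent, the events $\{\tau_k\le\epsilon_k\}$ are independent as well. The second Borel--Cantelli lemma, applied to the divergent series $\sum_k (k\sqrt{\log k})^{-1}=\infty$, yields $\Prob[\tau_k\le\epsilon_k\text{ i.o.}]=1$. Combined with $\epsilon_k\downarrow0$, this forces $\inf_{k\ge1}\tau_k=0$ almost surely, which by the trichotomy recalled just before the statement rules out the existence of a local solution for the initial datum $-\im\Xi$.

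For the regularity part, the argument is essentially the same Kolmogorov computation as in the previous lemma. Under the assumption $\inf_k k^2\epsilon_k>0$, the factor $(1-\e^{-k^2\epsilon_k})^{-1}$ is bounded, so $\sigma_k\lesssim k/\sqrt{\log k}$ and hence $\sigma_k^2/k^4\lesssim 1/(k^2\log k)$. Setting $E=(-\Delta)^{-1}\Xi$, one obtains
\[
  \E[|E(x)-E(y)|^2]
    \;\lesssim\; \sum_{k\ge1}
      \frac{(1\wedge k|x-y|)^2}{k^2\log k}
    \;\lesssim\; |x-y|,
\]
by splitting the sum at $k\sim|x-y|^{-1}$. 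Kolmogorov's continuity theorem (together with Gaussianity of $E$, which turns the $L^2$ estimate into an $L^p$ estimate for all $p$) shows $E\in\Cs^{1/2-}$ a.s., hence $\Xi\in\Cs^{-3/2-}$ a.s.

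The main subtlety, and the only non--routine point, is the use of the \emph{second} (divergent) Borel--Cantelli lemma, which requires independence of the events. This is the reason for the specific tuning of $\sigma_k$: a slightly smaller $\sigma_k$ would make the series convergent and the argument of the previous lemma would then give $\inf_k\tau_k>0$ instead. Everything else reduces to the sharp Gaussian tail estimate and the Kolmogorov argument already employed above.
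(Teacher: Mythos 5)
Your proposal is correct and follows essentially the same route as the paper: the same reduction of $\{\tau_k\le\epsilon_k\}$ to the Gaussian tail event $\{Z\ge\sqrt{2\log k}\}$, divergence of $\sum_k (k\sqrt{\log k})^{-1}$ with Borel--Cantelli, and the same Kolmogorov continuity computation for $E=(-\Delta)^{-1}\Xi$ giving $\Xi\in\Cs^{-\frac32-}$. The only (welcome) difference is that you make explicit the use of the \emph{second} Borel--Cantelli lemma and the independence of the events, which the paper leaves implicit.
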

\begin{proof}
  For the first part we use again a Borel--Cantelli argument.
  As above,
  \[
    \Prob[\tau_k\leq\epsilon_k]
      = \Prob[Z\geq\sqrt{2\log k}]
      \gtrsim\frac1{\sqrt{2\log k}}\e^{-\log k}
      = \frac1{k\sqrt{2\log k}},
  \]
  but this time the series diverges and $\tau_k\leq\epsilon_k$
  for infinitely many $k$ with probability one.
  
  The regularity follows as in the previous lemma, since
  for $E=(-\Delta)^{-1}\Xi$,
  \[
    \begin{multlined}[.9\linewidth]
    \E[|E(x) - E(y)|^2]
      = \sum_{k=1}^\infty \frac{\sigma_k^2}{k^4}(\sin kx-\sin ky)^2\leq\\
      \leq \frac1{2(1-\e^{-\delta})^2}\sum_{k=1}^\infty\frac{(1\wedge k|x-y|)^2}{k^2}
      \lesssim |x-y|.
    \end{multlined}
  \]
  where $\delta=\inf_k k^2\epsilon_k$.
\end{proof}
%%
%%
%%%%%%%%%%%%%%%%%%%%%%%%%%%%%%%%%%%%%%%%%%%%%%%%%%%%%%%%%%%%%
\section{A logarithmically sub--critical result}\label{s:local}

In this section we discuss the existence of solutions
with random initial conditions in the critical case.
We focus, as a standing example, on the Burgers equation
in dimension $d=1$, which is the equation for the
derivative of the solution of KPZ,
\begin{equation}\label{e:burgers}
  \partial_t u - u_{xx}
    = (u^2)_x.
\end{equation}
Notice that we have not changed the parameter $\delta$
from Assumption~\ref{a:nonlinear_assumption}. The
critical space on the other hand is (clearly)
shifted by one derivative.
%%
%%%%%%%%%%%%%%%%%%%%%%%%%%%%%%%%%%%
\subsection{Setting of the problem}

%%
%%%%%%%%%%%%%%%%%%%%%%%%%%%%%%%%%%%
\subsubsection{Random initial data}

We consider random initial data
as in Assumption~\ref{a:u0}, with
$\theta=1-\frac{d}2=\frac12$ and coefficients
\begin{equation}\label{e:logphi}
  |\phi_k|
    \sim |k|^\theta(\log(1+|k|))^{-\nu-\frac12}.
\end{equation}
Using \cite[Theorem 6.3]{Kah1985} as in formula
\eqref{e:u0reg}, we see that
\[
  \|\Delta_j u_0\|_\infty
    \leq C j^{-\nu}2^j,
\]
for a random constant $C$. Thus $\nu=0$ corresponds
to critical initial data, and $\nu>0$ to logarithmically
sub--critical initial data.
%%
%%%%%%%%%%%%%%%%%%%%%%%%%%%%%%%%%%
\subsubsection{The solution space}

We will solve the problem as in Section~\ref{s:roughfp}.
We set $u = v + \etazero$ and consider the problem
\begin{equation}\label{e:burgers2}
  \partial_t v - \partial_{xx}v
    = (v^2)_x + 2(v\etazero)_x + \etauno,
\end{equation}
The term $\etaunodue$, obtained by applying the heat
kernel to $\etauno$, has enough regularity for what we
will do. The troublemaker is $(v\etazero)_x$, since
given the regularity of $v$ and $\etauno$, the singularity
in time at $t=0$ is not integrable.
Before illustrating how to circumvent the problem,
we introduce the space where the problem will be solved.

Define the space $\Cs^\alpha_\kappa$ as the closure of smooth
functions with respect to the norm
\[
  \|u\|_{(\alpha,\kappa)}
    \eqdef\sup_{j\geq-1}(1+|j|^\kappa) 2^{\alpha j}
      \|\Delta_j u\|_\infty.
\]
This is as the space $\Cs^\alpha_\kappa$, but with a
logarithmically corrected growth. We state a few
properties of these spaces we shall need later.
To this end, define a tamed logarithm
$\ell:(0,\infty)\to\R$ as
\[
  \ell(t)
    = \log(\tfrac1t\vee 2).
\]
\begin{lemma}\label{l:cak_prop}
  The following properties hold,
  \begin{itemize}
    \item if $\alpha>0$ and $\kappa\in\R$, or if $\alpha=0$ and
      $\kappa>1$, then $\Cs^\alpha_\kappa\cdot \Cs^\alpha_\kappa
      \subset \Cs^\alpha_\kappa$,
    \item $\Cs^{\alpha+\epsilon}\subset \Cs^\alpha_\kappa
      \subset \Cs^\alpha$, for every $\epsilon>0$,
    \item if $\alpha'<\alpha$ and any $\kappa$, $\kappa'$, or
      if $\alpha=\alpha'$ and $\kappa\geq\kappa'$, then
      for every $t>0$ and $u\in \Cs^{\alpha'}_{\kappa'}$,
      \[
        \|\e^{t\Delta}u\|_{(\alpha,\kappa)}
          \lesssim t^{-\frac12(\alpha-\alpha')}
            \ell(t)^{\kappa-\kappa'}
            \|u\|_{(\alpha',\kappa')}.
      \]
  \end{itemize}
\end{lemma}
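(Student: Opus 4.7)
The plan is to prove the three items separately; the common theme is to carry the logarithmic Littlewood--Paley weight through standard paraproduct and heat--semigroup estimates.

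Item~(2) is immediate from the definitions: $\Cs^{\alpha+\epsilon}\subset\Cs^\alpha_\kappa$ because $\sup_{j\geq-1}(1+|j|^\kappa)2^{-\epsilon j}<\infty$, and $\Cs^\alpha_\kappa\subset\Cs^\alpha$ because $1+|j|^\kappa\geq 1$.

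For Item~(1) I would use the Bony decomposition $uv=u\plt v+u\peq v+u\pgt v$ together with the standard block--level estimates
\[
  \|\Delta_j(u\plt v)\|_\infty
    \lesssim \Bigl(\sum_{m\leq j+1}\|\Delta_m u\|_\infty\Bigr)\|\Delta_j v\|_\infty,
  \qquad
  \|\Delta_j(u\peq v)\|_\infty
    \lesssim \sum_{m\gtrsim j}\|\Delta_m u\|_\infty\|\Delta_m v\|_\infty,
\]
and symmetrically for $u\pgt v$. Inserting $\|\Delta_m u\|_\infty\leq(1+|m|^\kappa)^{-1}2^{-\alpha m}\|u\|_{(\alpha,\kappa)}$ reduces the task to the summability of $\sum_m(1+|m|^\kappa)^{-1}2^{-\alpha m}$ (for the paraproduct) and of $\sum_{m\geq j}(1+|m|^\kappa)^{-2}2^{-2\alpha m}$ (for the resonant term). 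Both are finite for $\alpha>0$ (geometric tail, any $\kappa$) or for $\alpha=0$ and $\kappa>1$ (tail of a convergent $p$--series), and elementary bookkeeping recovers the target factor $(1+|j|^\kappa)^{-1}2^{-\alpha j}$ on the right.

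Item~(3) rests on the pointwise Bernstein bound $\|\Delta_j\e^{t\Delta}u\|_\infty\lesssim\e^{-ct2^{2j}}\|\Delta_j u\|_\infty$, which reduces the claim to estimating
\[
  \sup_{j\geq-1}F(j),\qquad
  F(j)=\frac{1+|j|^\kappa}{1+|j|^{\kappa'}}\,2^{(\alpha-\alpha')j}\,\e^{-ct2^{2j}}.
\]
The factor $2^{(\alpha-\alpha')j}\e^{-ct2^{2j}}$ is maximised at the critical scale $j_0\sim\tfrac12\log_2(1/t)$, where it is of size $\sim t^{-(\alpha-\alpha')/2}$ and where $(1+|j_0|^\kappa)/(1+|j_0|^{\kappa'})\sim\ell(t)^{\kappa-\kappa'}$. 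A short case analysis, based on the monotonicity in $|j|$ of the logarithmic ratio (with sign given by $\kappa-\kappa'$) and on exponential dominance for $j>j_0$, shows that the supremum is attained up to a multiplicative constant at $j_0$, giving the announced bound. The case $\alpha=\alpha'$ with $\kappa\geq\kappa'$ is handled identically, omitting the power of $t$.

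I expect the most delicate step to be Item~(3) in the regime $\kappa<\kappa'$, where $\ell(t)^{\kappa-\kappa'}$ is small as $t\downarrow0$, so one must verify that bounded frequencies $j\ll j_0$ (where the logarithmic ratio is of order one) do not violate the bound. This is vacuous for $t$ bounded away from zero; for small $t$, one has $2^{(\alpha-\alpha')j}\ll t^{-(\alpha-\alpha')/2}$ whenever $j\ll j_0$, so the loss of the logarithmic factor is overcompensated by the gain in the power of $t$, and the claim is sharp.
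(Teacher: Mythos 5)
Your proposal is correct and follows essentially the same route as the paper: the product estimate via the Bony decomposition with block-level bounds, the trivial embeddings, and the semigroup estimate via the heat-kernel block bound $\|\Delta_j\e^{t\Delta}u\|_\infty\lesssim\e^{-ct2^{2j}}\|\Delta_j u\|_\infty$. The only cosmetic difference is that you carry out the supremum computation (locating the critical scale $j_0\sim\tfrac12\log_2(1/t)$) by hand, whereas the paper delegates exactly this to its elementary Lemma~\ref{l:technical} on the quantities $H_{\nu,p,\tau}$.
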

\begin{proof}
  For the first property, if $u,v\in \Cs^\alpha_\kappa$,
  with $\|u\|_{(\alpha,\kappa)}\leq1$, $\|v\|_{(\alpha,\kappa)}\leq 1$,
  \[
    \begin{multlined}[.95\linewidth]
      \|\Delta_j(u\plt v)\|_\infty
        \approx\Bigl\|\Delta_j\Bigl(\sum_{m=-1}^{j-2}(\Delta_m u)(\Delta_n v)\Bigr)\Bigr\|_\infty\lesssim\\
        \approx\sum_{m=-1}^{j-2}\|\Delta_m u\|_\infty\|\Delta_j v\|_\infty
        \lesssim j^{-\kappa}2^{-\alpha j}\sum_{m=-1}^{j-2} m^{-\kappa}2^{-\alpha m}
        \lesssim j^{-\kappa}2^{-\alpha j},
    \end{multlined}
  \]
  and
  \[
    \|\Delta_j(u\peq v)\|_\infty
      \lesssim\sum_{m=j}^\infty \|\Delta_m u\|_\infty\|\Delta_m v\|_\infty
      \lesssim \sum_{m=j}^\infty m^{-2\kappa}2^{-2\alpha m}
      \lesssim j^{-\kappa}2^{-\alpha j}.
  \]
  
  The second property is immediate by the definition of norms. For the third, using \cite[Proposition 2.4]{MouWebXu2016},
  \[
    \begin{multlined}[.8\linewidth]
      j^\kappa 2^{\alpha j}\|\Delta_j(\e^{t\Delta}u)\|_\infty
        \lesssim j^\kappa 2^{\alpha j}\e^{-2^{2j}t}
          \|\Delta_j u\|_\infty =\\
        = (j^{\kappa'}2^{\alpha'j}\|\Delta_ju\|_\infty)
          j^{\kappa-\kappa'}2^{(\alpha-\alpha')j}
          \e^{-2^{2j}t}
        \lesssim H_{\kappa-\kappa',\alpha-\alpha',2}(t)
          \|u\|_{(\alpha',\kappa')},
    \end{multlined}
  \]
  and the conclusion follows from Lemma~\ref{l:technical}.
\end{proof}
%%
%%%%%%%%%%%%%%%%%%%%%%%%%%%%%%%%%
\subsection{A ``classical'' case}

Let us solve first a fixed point theorem for
\[
  \partial_t u
    = \Delta u + (u^2)_x,
\]
with a norm better suited for the critical level,
\[
  \|u\|_\bullet
    \eqdef\sup_{t\leq T}t^{\frac12}
      \ell(t)^a\|u(t)\|_{(0,\kappa)},
\]
with $\kappa>1$. Then by Lemma \ref{l:cak_prop},
\[
  \begin{multlined}[.9\linewidth]
    \|\Vs(u)\|_{(0,\kappa)}
      \leq\int_0^t \|\e^{(t-s)\Delta}(u^2)_x\|_{(0,\kappa)}\,ds
      \lesssim \int_0^t
        \|\e^{(t-s)\Delta}u^2\|_{(1,\kappa)}\,ds\\
      \lesssim\|u\|_\bullet^2\int_0^t
        (t-s)^{-\frac12}s^{-1}\ell(s)^{-2a}\,ds
      \lesssim t^{-\frac12}\ell(t)^{1-2a}\|u\|_\bullet^2,
  \end{multlined} 
\]
since by an elementary computation, if $\beta\in(0,1)$ and $a\geq0$, or $\beta=1$ and $a>1$, then
\[
  \int_0^t (t-s)^{-\frac12}s^{-\beta}\ell(s)^{-a}\,ds\lesssim
    \begin{cases}
      t^{\frac12-\beta}\ell(t)^{-a},
        \qquad&\beta<1,\\
      t^{-\frac12}\ell(t)^{1-a},
        \qquad&\beta=1.            
    \end{cases}
\]
Therefore, if $a>1$,
\[
  \|\Vs(u)\|_\bullet
    \leq \ell(T)^{-(a-1)}\|u\|_\bullet^2,
\]
Consider the initial condition. By Lemma \ref{l:cak_prop},
\[
  \|\e^{t\Delta}u(0)\|_{(0,\kappa)}
    \lesssim t^{-\frac12}\ell(t)^{\kappa-\kappa'}
      \|u(0)\|_{-1,\kappa'},
\]
thus
\[
  \|\e^{t\Delta}u(0)\|_\bullet
    \lesssim \ell(T)^{\kappa-\kappa'+a}\|u(0)\|_{(-1,\kappa')},
\]
if $\kappa-\kappa'+a<0$, that is $\kappa'>\kappa+a$.
This allows to prove  a fixed point theorem with initial
condition in $\Cs^{-1}_{\kappa'}$.

In view of a comparison with the results in the next
sections, consider an initial condition with
\[
  \|\Delta_j u(0)\|_\infty
    \sim j^{-\nu}2^j,
\]
then
\[
  \|u(0)\|_{(-1,\kappa')}
    =\sup_j j^{\kappa'}2^{-j}\|\Delta_j u(0)\|_\infty
    \sim\sup_j j^{\kappa'-\nu},
\]
is finite if $\nu\geq\kappa'$. Thus we have
$\nu\geq\kappa'>\kappa+a>2$. We will find
in Section~\ref{s:twist} below the condition
$\nu>1$, and in Section~\ref{s:slocal}
the condition $\nu>\frac12$).
%%
%%%%%%%%%%%%%%%%%%%%%%%%%%%%%%%%%%%%%%%%%%%%%%%%%%%%%%%%%%%%%%
\subsection{A ``classical'' case, with random
  initial condition}\label{s:twist}

Let $\Ys^{\kappa,\beta}_T$ be the space defined as
$\Xs^{0,\beta}$, but with the $\Cs^0$ norm replaced
by the $\Cs^0_\kappa$ norm.
By Theorem~\ref{t:eta12_reg} we know that
$\etaunodue\in\Xs^{0,\beta}_T$ (actually $\Vs^{0,\beta}$)
for $\beta>\beta_0(0)-1=\frac14$. The same argument
shows that $\etaunodue\in\Vs^{2\epsilon,\beta}$ for
$\beta>\frac14+\epsilon$, with $\epsilon>0$, thus
by Lemma~\ref{l:cak_prop} $\etaunodue\in\Ys^{\kappa,\beta}_T$
for $\beta>\frac14$ and $\kappa\geq0$, with
\begin{equation}\label{e:poly1}
  \|\etaunodue\|_{\Ys^{\kappa,\beta}_T}
    \leq C_{\tunodue} g_T,
\end{equation}
where $C_{\tunodue}$ is a random constant
and $g_T\lesssim T^\epsilon$ for a small
enough number $\epsilon>0$ (depending on the
value of $\beta$).

Moreover the previous lemma ensures that
for $\beta\in(\frac14,\frac12)$ and $\kappa>1$,
\begin{equation}\label{e:poly2}
  \|\Vc(v,v)\|_{\Ys^{\kappa,\beta}_T}
    \lesssim g_T\|v\|_{\Ys^{\kappa,\beta}_T}^2,
\end{equation}
with $g_T\lesssim T^\epsilon$ as above.
This shows that 
the term $(v\etazero)_x$ is the
``troublemaker'', as is the term that so far
prevents us to apply a fixed point theorem
to problem \eqref{e:burgers2} in
$\Ys^{\kappa,\beta}_T$

In this section we analyze the term $(v\etazero)_x$,
and show that if $\nu>1$, then $\Vs(v\plt\etazero)$
is well defined and the fixed point theorem strategy
can be completed.
\begin{proposition}\label{p:classical}
  Consider a random initial condition as in
  Assumption~\ref{a:u0}, with coefficients as
  in \eqref{e:logphi}. If $\nu>1$, then there
  is a random time $T$, with $T>0$ {a.\,s.},
  such that problem \eqref{e:burgers2} has
  a unique solution in $\Ys^{\kappa,\beta}_T$,
  where $\beta\in(\frac14,\frac12)$,
  and $\kappa\in(1,\nu]$.
\end{proposition}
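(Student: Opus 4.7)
The plan is to solve \eqref{e:burgers2} by a contraction mapping argument applied to
\[
    \Phi(v) \eqdef \Vc(v,v) + 2\Vc(v,\etazero) + \etaunodue
\]
on a suitable ball of $\Ys^{\kappa,\beta}_T$, for a small random $T=T(\omega)>0$. The estimates \eqref{e:poly1} and \eqref{e:poly2} already dispose of $\etaunodue$ and $\Vc(v,v)$, each with a prefactor $g_T\lesssim T^\epsilon$ that vanishes as $T\downarrow 0$, so the real work is the cross-term $\Vc(v,\etazero)$: the aim is to show that the logarithmic sub-criticality of $u_0$ is just enough to give it a factor going to $0$ with $T$, thereby closing the fixed point. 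For definiteness I fix $\kappa$ strictly inside $(1,\nu)$, which is available precisely because $\nu>1$.

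The two ingredients I lean on are the algebra property $\Cs^0_\kappa\cdot\Cs^0_\kappa\subset\Cs^0_\kappa$ (first bullet of Lemma~\ref{l:cak_prop}, valid since $\kappa>1$) and the log-refined Schauder estimate of its third bullet. The Kahane-type argument recalled around \eqref{e:u0reg}, applied under \eqref{e:logphi}, yields $u_0\in\Cs^{-1}_\nu$ almost surely with a random constant $C_{\tzero}$, hence
\[
    \|\etazero(s)\|_{(0,\kappa)}
        \lesssim s^{-\frac12}\,\ell(s)^{\kappa-\nu}\,C_{\tzero};
\]
the new ingredient absent from the deterministic critical setting is the extra factor $\ell(s)^{\kappa-\nu}$, which vanishes as $s\downarrow 0$ since $\kappa<\nu$. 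Combining this with $\|v(s)\|_{(0,\kappa)}\le s^{-\beta}\|v\|_{\Ys^{\kappa,\beta}_T}$, the algebra property, and the Schauder estimate used to lift $\partial_x(v\etazero)\in\Cs^{-1}_\kappa$ back to $\Cs^0_\kappa$, the problem reduces to the time integral
\[
    \|\Vc(v,\etazero)(t)\|_{(0,\kappa)}
        \lesssim \|v\|_{\Ys^{\kappa,\beta}_T}\,C_{\tzero}
            \int_0^t (t-s)^{-\frac12}\,s^{-\beta-\frac12}\,\ell(s)^{\kappa-\nu}\,ds,
\]
which by a routine split at $s=t/2$, monotonicity of $\ell$ and $\beta<\frac12$, is bounded by $t^{-\beta}\ell(t)^{\kappa-\nu}$. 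Hence $\|\Vc(v,\etazero)\|_{\Ys^{\kappa,\beta}_T}\lesssim \ell(T)^{\kappa-\nu}C_{\tzero}\|v\|_{\Ys^{\kappa,\beta}_T}$, with a prefactor vanishing with $T$.

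Assembling the three bounds and writing the analogous estimate for the difference $\Phi(v_1)-\Phi(v_2) = \Vc(v_1+v_2,v_1-v_2) + 2\Vc(v_1-v_2,\etazero)$ yields self-mapping and Lipschitz constants of the schematic form $g_T\cdot(\text{radius}+C_{\tunodue}) + \ell(T)^{\kappa-\nu}C_{\tzero}$, each strictly less than $1$ for $T=T(\omega)>0$ small enough almost surely. The Banach fixed-point theorem then produces the unique solution in the ball, and uniqueness in all of $\Ys^{\kappa,\beta}_T$ follows by the usual Gronwall-type argument on the difference of two solutions. I expect the sole substantive obstacle to be the integral displayed above: the product $v\etazero$ sits exactly on the edge of what the time integral can absorb, and the gap $\nu-\kappa>0$ supplied by the logarithmic sub-criticality of the initial condition is the precise slack needed to turn this borderline behaviour into a genuine $T\downarrow 0$ smallness, with no higher-order expansion or renormalization needed.
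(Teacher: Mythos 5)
Your proposal is correct in substance, but it takes a genuinely different route from the paper. The paper never multiplies $v$ and $\etazero$ wholesale: it splits $v\etazero$ into the Bony pieces $v\pgt\etazero$, $v\peq\etazero$, $v\plt\etazero$ and estimates each Littlewood--Paley block directly, keeping the heat-kernel decay $\e^{-2^{2n}s}$ of the blocks of $\etazero$ inside the time integral; this shows that the first two pieces are harmless under the sole condition $\nu>0$ (no constraint on $\kappa$), and isolates $v\plt\etazero$ as the only term needing $\kappa\leq\nu$, via the bound \eqref{e:badguy} $\lesssim tH_{\kappa-\nu,2,2}(t)\lesssim\ell(t)^{\kappa-\nu}$. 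You instead estimate the full product by the algebra property of $\Cs^0_\kappa$ ($\kappa>1$) together with the log-refined Schauder bound $\|\etazero(s)\|_{(0,\kappa)}\lesssim s^{-1/2}\ell(s)^{\kappa-\nu}$, and the resulting integral $\int_0^t(t-s)^{-1/2}s^{-\beta-1/2}\ell(s)^{\kappa-\nu}\,ds\lesssim t^{-\beta}\ell(t)^{\kappa-\nu}$ does close (it is the paper's own elementary integral computation with $\beta+\tfrac12<1$), so your contraction argument goes through for $\kappa\in(1,\nu)$. What you lose relative to the paper is twofold but minor here: first, the endpoint $\kappa=\nu$ of the statement, where your prefactor $\ell(T)^{\kappa-\nu}$ degenerates to $1$ (though the paper's own estimate is likewise only bounded, not vanishing, there); second, and more structurally, your coarse bound does not localize the difficulty in the single paraproduct $v\plt\etazero$, and it is precisely that blockwise information (\eqref{e:badguy} and the fact that $\pgeq$ only needs $\nu>0$) that the paper recycles in Lemma~\ref{l:remainder} and Theorem~\ref{t:local} to treat the harder regime $\nu\in(\frac12,1]$. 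So your argument is a perfectly valid and more elementary proof of Proposition~\ref{p:classical} itself, while the paper's proof is organized to prepare the local description of Section~\ref{s:slocal}.
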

\begin{proof}
  Let $v\in\Ys^{\kappa,\beta}_T$ with
  $\|v\|_{\Ys^{\kappa,\beta}_T}\leq 1$.
  We have
  \[
    \|\Delta_j(v\pgt\etazero)\|_\infty
      \lesssim \|\Delta_j v\|_\infty
        \sum_{n=0}^{j-2}\|\Delta_n\etazero\|_\infty
      \lesssim j^{-\kappa}t^{-\beta}
        \sum_{n=0}^{j-2}n^{-\nu}2^n\e^{-2^{2n}t},
  \]
  where we have used the fact that, to compute
  $\Delta_j(v\pgt\etazero)$, the relevant modes of $v$
  are those at levels $m\approx j$ (for simplicity of
  computations we have only considered $m=j$, but due
  to the estimates we have on $\Delta_j v$ and
  $\Delta_j\etazero$, the result is the same up
  to a multiplicative constant). Thus
  \begin{equation}\label{e:log1}
    \begin{aligned}
      t^\beta j^\kappa\Bigl\|\Delta_j
          \int_0^t\e^{(t-s)\Delta}(v\pgt\etazero)_x\,ds\Bigr\|_\infty
        &\lesssim t^\beta j^\kappa\int_0^t
          2^j\e^{-2^{2j}(t-s)}\|\Delta(v\pgt\etazero)\|_\infty\,ds\\
        &\lesssim t^\beta\sum_{n=2}^{j-2}n^{-\nu}2^n
          \int_0^t 2^j\e^{-2^{2j}(t-s)}\e^{-2^{2n}s}s^{-\beta}\,ds\\
        &\lesssim \sqrt{t}\sum_{n=2}^{j-2}n^{-\nu}2^n
          \e^{-2^{2n}t}\\
        &= \sqrt{t} G_{-\nu,1,2}(t).
    \end{aligned}
  \end{equation}
  From Lemma~\ref{l:technical} we know that
  $\sqrt{t} G_{-\nu,1,2}(t)$ is bounded if
  $\nu\geq0$, and converges to $0$ as $t\to0$
  if $\nu>0$. We notice that in
  particular we do not need the assumption on
  $\kappa$ here.

  Likewise,
  \[
    \begin{multlined}[.9\linewidth]
      \|\Delta_j(v\peq\etazero)\|_\infty
        \lesssim \sum_{n=j}^\infty\|\Delta_n v\|_\infty
          \|\Delta_n\etazero\|_\infty\lesssim\\
        \lesssim j^{-\kappa}t^{-\beta}
          \sum_{n=j}^\infty n^{-\nu}2^n\e^{-2^{2n}t}
        \lesssim j^{-\kappa}t^{-\beta}G_{-\nu,1,2}(t),
    \end{multlined}
  \]
  thus, by Lemma~\ref{l:technical},
  \begin{equation}\label{e:log2}
    \begin{aligned}
      t^\beta j^\kappa\Bigl\|\Delta_j
          \int_0^t\e^{(t-s)\Delta}(v\peq\etazero)_x\,ds\Bigr\|_\infty
        &\lesssim t^\beta j^\kappa\int_0^t
          2^j\e^{-2^{2j}(t-s)}\|\Delta(v\peq\etazero)\|_\infty\,ds\\
        &\lesssim t^\beta\int_0^t 2^j\e^{-2^{2j}(t-s)}
          s^{-\beta}G_{-\nu,1,2}(s)\,ds\\
        &\lesssim t^\beta\int_0^t (t-s)^{-\frac12}
          s^{-\beta}G_{-\nu,1,2}(s)\,ds
    \end{aligned}
  \end{equation}
  and, as before, it is sufficient to assume that $\nu>0$.
  
  Finally, since $\kappa>1$,
  \[
    \|\Delta_j(v\plt\etazero)\|_\infty
      \lesssim \|\Delta_j\etazero\|_\infty
        \sum_{n=0}^{j-2}\|\Delta_n v\|_\infty
      \lesssim j^{-\nu}2^j\e^{-2^{2j}t}
        \sum_{n=0}^{j-2} n^{-\kappa}t^{-\beta}
      \lesssim j^{-\nu}2^j\e^{-2^{2j}t}t^{-\beta},
  \]
  therefore
  \begin{equation}\label{e:badguy}
    \begin{aligned}
      t^\beta j^\kappa\Bigl\|\Delta_j
          \int_0^t\e^{(t-s)\Delta}(v\plt\etazero)_x\,ds\Bigr\|_\infty
        &\lesssim t^\beta j^\kappa\int_0^t
          2^j\e^{-2^{2j}(t-s)}\|\Delta_j(v\plt\etazero)\|_\infty\,ds\\
        &\lesssim j^{\kappa-\nu}2^{2j}t\e^{-2^{2j}t}\\
        &\lesssim t H_{\kappa-\nu,2,2}(t),
    \end{aligned}
  \end{equation}
  where $tH_{\kappa-\nu,2,2}(t)$ is bounded for $\kappa\leq\nu$,
  and $tH_{\kappa-\nu,2,2}(t)\to0$ for $\kappa<\nu$,
  by Lemma~\ref{l:technical}.
\end{proof}
%%
%%%%%%%%%%%%%%%%%%%%%%%%%%%%%%%%%%%%%%%%%%%%%%
\subsection{Local description}\label{s:slocal}

Consider the case $\nu\leq1$. From the proof of
Proposition~\ref{p:classical}, we see that there
is a random constant $C$, independent from $T\leq1$,
such that
\[
  \|\Vs(v\pgeq\etazero)\|_{\Ys^{\kappa,\beta}_T}
    \leq C g_{\nu,T}\|v\|_{\Ys^{\kappa,\beta}_T}
\]
for every $v\in \Ys^{\kappa,\beta}_T$, with
$g_{\nu,T}\downarrow0$ as $T\downarrow0$, and
in the above formula by $\Vs(v\pgeq\etazero)$
we mean that only the part $v\pgeq\etazero$ of
the product $v\etazero$ appears in $\Vs$.
Thus the
irregularity of a solution of \eqref{e:burgers2}
is due to the term $\Vs(v\plt\etazero)$.

For a given $v\in\Ys^{\kappa,\beta}$, set
\[
  R(v)
    \eqdef \Vs(v,v) + \etaunodue + 2\Vs(v\pgeq\etazero)
\]
\begin{lemma}\label{l:remainder}
  Let $u_0$ be a random field as in Assumption~\ref{a:u0},
  with coefficients as in \eqref{e:logphi}, and
  $\nu\in(\frac12,1]$. If $\kappa\in(1,2\nu)$
  and $v,v'\in\Ys^{\kappa,\beta}_T$, then
  \[
    t^\beta j^\kappa\ell(t)^\nu\|\Delta_j R(v)\|_\infty
      \lesssim (1 + \|v\|_{\Ys^{\kappa,\beta}_T})\|v\|_{\Ys^{\kappa,\beta}_T}^2.
  \]
  Moreover,
  \[
    \|\Vs(R(v)\plt\etazero)\|_{\Ys^{\kappa,\beta}_T}
      \lesssim \ell(T)^{\kappa-2\nu}
        (1 + \|v\|_{\Ys^{\kappa,\beta}_T}) \|v\|_{\Ys^{\kappa,\beta}_T}.
  \]
  and 
  \[
    \|\Vs(R(v)\plt\etazero) - \Vs(R(v')\plt\etazero)\|_{\Ys^{\kappa,\beta}_T}
      \lesssim \ell(T)^{\kappa-2\nu}
        (1 + \|v+v'\|_{\Ys^{\kappa,\beta}_T})\|v-v'\|_{\Ys^{\kappa,\beta}_T}.
  \]
  
\end{lemma}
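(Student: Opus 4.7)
The plan is to prove the three bounds in sequence, viewing $R(v)$ as an element of the refinement of $\Ys^{\kappa,\beta}_T$ obtained by inserting an extra weight $\ell(t)^\nu$ at each frequency $j$; this extra logarithmic smallness is the single improvement that unlocks the remaining two estimates. Throughout I write things as pointwise bounds on $\Delta_j$ and treat the three summands of $R(v)$ independently.

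For the first (pointwise) estimate, I handle each piece separately. The stochastic term $\etaunodue$ actually lies in $\Vs^{\alpha,\beta}$ for some $\alpha>0$ by the argument leading to \eqref{e:poly1}, so Lemma~\ref{l:cak_prop} converts that spatial gain into the required factor $\ell(t)^{-\nu}$. For $\Vs(v,v)$, the hypothesis $\kappa>1$ in Lemma~\ref{l:cak_prop} gives $\|v(s)^2\|_{(0,\kappa)}\lesssim s^{-2\beta}\|v\|_{\Ys^{\kappa,\beta}_T}^2$, and the semigroup estimate of the same lemma applied with target exponent $\kappa'>\kappa+\nu$ produces exactly the $\ell(t)^{-\nu}$ decay (one gains a half derivative while spending a logarithmic power). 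For $2\Vs(v\pgeq\etazero)$ the idea is to reread the estimates \eqref{e:log1} and \eqref{e:log2} in the proof of Proposition~\ref{p:classical}: the bounds there were expressed in terms of $\sqrt{t}\,G_{-\nu,1,2}(t)$, and Lemma~\ref{l:technical} gives in fact $\sqrt{t}\,G_{-\nu,1,2}(t)\lesssim \ell(t)^{-\nu}$ in the regime $\nu>0$, supplying the improvement.

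For the second claim I redo the computation \eqref{e:badguy} with $v$ replaced by $R(v)$. Since $R(v)\plt\etazero$ puts low frequencies on $R(v)$, the improved bound from step one together with summability $\sum m^{-\kappa}<\infty$ (which requires $\kappa>1$) yields
\[
  \|\Delta_j(R(v)\plt\etazero)\|_\infty
    \lesssim j^{-\nu} 2^j \e^{-2^{2j}s} s^{-\beta}\ell(s)^{-\nu}
      \cdot P(\|v\|_{\Ys^{\kappa,\beta}_T}),
\]
where $P$ is the polynomial appearing in the first claim. The $s$-integration as in \eqref{e:badguy}, combined with the elementary estimate $\int_0^t s^{-\beta}\ell(s)^{-\nu}\,ds\lesssim t^{1-\beta}\ell(t)^{-\nu}$, produces a final bound of the form $t\,\ell(t)^{-\nu}\,H_{\kappa-\nu,2,2}(t)$, which Lemma~\ref{l:technical} identifies with $\ell(t)^{\kappa-2\nu}$. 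Taking the supremum over $t\leq T$ gives the asserted control by $\ell(T)^{\kappa-2\nu}$.

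The Lipschitz estimate then follows by bilinearity: writing
\[
  R(v) - R(v')
    = \Vc(v+v',v-v') + 2\Vc((v-v')\pgeq\etazero),
\]
one recognises an expression of the same type as $R$ itself (the $\etaunodue$ term cancels), with $v^2$ and $v$ replaced by $(v+v')(v-v')$ and $v-v'$ respectively, so the pointwise bound from the first step applies mutatis mutandis with polynomial factor $(1+\|v+v'\|)\|v-v'\|$, and the paraproduct computation of the second step closes the argument. The main obstacle, and the step that forces $\kappa\in(1,2\nu)$ (hence $\nu>\tfrac12$), is the first one: extracting the $\ell(t)^{-\nu}$ gain on $\Vs(v\pgeq\etazero)$ hinges on the sharp version of Lemma~\ref{l:technical}, and every subsequent estimate is downstream of this single logarithmic improvement.
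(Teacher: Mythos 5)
Your route is the same as the paper's: the pointwise bound on $R(v)$ is obtained by combining \eqref{e:poly1}, \eqref{e:poly2}, \eqref{e:log1} and \eqref{e:log2} (with Lemma~\ref{l:technical} supplying the $\ell(t)^{-\nu}$ refinement), the second bound is the computation \eqref{e:badguy} rerun with $R(v)$ in place of $v$, using $\sum_n n^{-\kappa}<\infty$ and $\int_0^t s^{-\beta}\ell(s)^{-\nu}\,ds\lesssim t^{1-\beta}\ell(t)^{-\nu}$ to arrive at $t\,H_{\kappa-\nu,2,2}(t)\,\ell(t)^{-\nu}\lesssim\ell(t)^{\kappa-2\nu}\leq\ell(T)^{\kappa-2\nu}$, and the Lipschitz bound follows by bilinearity after the cancellation of $\etaunodue$. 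This is exactly how the paper argues.

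One sub-step, however, would fail as written: for $\Vs(v,v)$ you invoke the heat-kernel estimate of Lemma~\ref{l:cak_prop} ``with target exponent $\kappa'>\kappa+\nu$''. In that estimate the factor $\ell(t)^{\kappa-\kappa'}$ has a negative exponent only if the \emph{input} carries the stronger logarithmic weight $\kappa'$, and all you know about $v(s)^2$ (from the product property of Lemma~\ref{l:cak_prop}, which is where $\kappa>1$ is used) is that it lies in $\Cs^0_\kappa$ with norm $\lesssim s^{-2\beta}\|v\|_{\Ys^{\kappa,\beta}_T}^2$; it is not in $\Cs^0_{\kappa'}$ for any $\kappa'>\kappa$, so the estimate has no hypothesis to act on. The gain for this term does not come from logarithms at all but from the strict inequality $\beta<\tfrac12$: exactly as in \eqref{e:poly2}, applied on $[0,t]$, one has $t^\beta\|\Vs(v,v)(t)\|_{(0,\kappa)}\lesssim t^{\frac12-\beta}\|v\|_{\Ys^{\kappa,\beta}_T}^2$, and the polynomial factor $t^{\frac12-\beta}$ dominates any power of $\ell(t)$ --- the same mechanism you already (correctly) use for $\etaunodue$. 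One could also salvage your version by first embedding $\Cs^0_\kappa\subset\Cs^{-\epsilon}_{\kappa'}$ and paying $(t-s)^{-\frac12(1+\epsilon)}$ in the heat kernel, but the room for that again comes from $\beta<\tfrac12$, so nothing is gained over \eqref{e:poly2}. A second, minor point: the constraint $\kappa<2\nu$ (hence $\nu>\tfrac12$, given $\kappa>1$) is not forced by the first estimate, which needs only $\nu>0$ and $\kappa>1$; it enters in the second and third bounds, where one needs $\kappa-2\nu<0$ so that $\sup_{t\leq T}\ell(t)^{\kappa-2\nu}=\ell(T)^{\kappa-2\nu}$ is small for small $T$, which is what closes the fixed point in Theorem~\ref{t:local}.
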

\begin{proof}
  The first statement follows from \eqref{e:poly1}, \eqref{e:poly2},
  \eqref{e:log1}, and \eqref{e:log2}. For the second statement,
  for $v$ such that $\|v\|_{\Ys^{\kappa,\beta}_T}\leq1$,
  \[
    \begin{aligned}
      t^\beta j^\kappa\|\Delta_j\Vs(R(v)\plt\etazero)\|_\infty
        &\approx t^\beta j^\kappa\int_0^t 2^j\e^{-2^{2j}(t-s)}
          \|\Delta_j(R(v)\plt\etazero)\|_\infty\,ds\\
        &\approx t^\beta j^\kappa\int_0^t 2^j\e^{-2^{2j}(t-s)}
          j^{-\nu}2^j\e^{-2^{2j}s}\sum_{n=0}^{j-2}s^{-\beta}
          \ell(s)^{-\nu} n^{-\kappa}\\
        %&\lesssim t j^{\kappa-\nu}2^{2j}\e^{-2^{2j}s}
        %  \ell(s)^{-\nu}\\
        &\lesssim t H_{\kappa-\nu,2,2}(t)\ell(t)^{-\nu}\\
        &\lesssim \ell(T)^{\kappa-2\nu},
    \end{aligned}
  \]
  using Lemma~\ref{l:technical}, since we have chosen $\kappa<2\nu$.
  The third statement follows likewise.
\end{proof}
Our original equation, can be written as
\[
  v
    = 2\Vs(v\plt\etazero) + R(v),
\]
where we have understood that $R(v)$ is a
``smooth'' perturbation. The above equality
represents both our equation and a decomposition
of the solution in its regular and irregular part.
We thus replace $v$ with its decomposition in the irregular
part of the equation, to get
\[
  \begin{aligned}
    v
      &= 2\Vs(v\plt\etazero) + R(v)\\
      &= 2\Vs\bigl(((2\Vs(v\plt\etazero) + R(v))\plt\etazero)\bigr) + R(v)\\
      &= 4\Vs(\Vs(v\plt\etazero)\plt\etazero) + 2\Vs(R(v)\plt\etazero) + R(v)
  \end{aligned}    
\]
\begin{theorem}\label{t:local}
  Let $u_0$ be a random field as in
  Assumption~\ref{a:u0}, with coefficients as
  in \eqref{e:logphi}. If $\nu\in(\frac12,1]$, then there
  is a random time $T$, with $T>0$ {a.\,s.},
  such that the problem
  \begin{equation}\label{e:second}
    v
      = 4\Vs(\Vs(v\plt\etazero)\plt\etazero) + 2\Vs(R(v)\plt\etazero) + R(v)
  \end{equation}
  has a unique solution in $\Ys^{\kappa,\beta}_T$,
  where $\beta\in(\frac14,\frac12)$,
  and $\kappa\in(1,2\nu]$.
\end{theorem}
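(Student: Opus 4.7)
I would establish the theorem by a Banach fixed point argument applied to the map
\[
  \Phi(v)
    \eqdef 4\Vs(\Vs(v\plt\etazero)\plt\etazero)
      + 2\Vs(R(v)\plt\etazero)
      + R(v)
\]
on a small closed ball of $\Ys^{\kappa,\beta}_T$, with $T$ a random time chosen sufficiently small. The goal is to verify self-mapping and contraction with prefactors that go to zero as $T\downarrow0$. Two of the three contributions are essentially already treated: Lemma~\ref{l:remainder} (after absorbing the $\ell(t)^\nu$ into the weight) gives $R(v)\in\Ys^{\kappa,\beta}_T$ with a bound of the form $(1+\|v\|)\,\|v\|^2$ that is small thanks to \eqref{e:poly1} and \eqref{e:poly2}, and its second statement gives $\|\Vs(R(v)\plt\etazero)\|_{\Ys^{\kappa,\beta}_T}\lesssim\ell(T)^{\kappa-2\nu}(1+\|v\|)\|v\|$, with prefactor going to zero if $\kappa<2\nu$ (and merely bounded at the endpoint, to be combined with the $T^\epsilon$ factor from the other terms).

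The heart of the proof, and the main obstacle, is the double-iterated term $\Vs(\Vs(v\plt\etazero)\plt\etazero)$, which is precisely what replaces the offending $\Vs(v\plt\etazero)$ from the proof of Proposition~\ref{p:classical}. I would treat it in two steps. First, write $w\eqdef\Vs(v\plt\etazero)$; although $w$ itself is \emph{not} in $\Ys^{\kappa,\beta}_T$ when $\kappa>\nu$, the computation leading to \eqref{e:badguy} already yields a usable scale-by-scale bound
\[
  t^\beta\|\Delta_j w\|_\infty
    \lesssim j^{-\nu}2^{2j}t\,\e^{-2^{2j}t}\,
      \|v\|_{\Ys^{\kappa,\beta}_T},
\]
which records that $w$ is space-smooth but carries a mild temporal singularity.

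Second, insert this bound into the paraproduct decomposition
\[
  \|\Delta_j(w\plt\etazero)\|_\infty
    \lesssim \|\Delta_j\etazero\|_\infty
      \sum_{n=0}^{j-2}\|\Delta_n w\|_\infty,
\]
using $\|\Delta_j\etazero\|_\infty\lesssim j^{-\nu}2^j\e^{-2^{2j}t}$. The crucial effect is that the two paraproduct blocks against $\etazero$ supply two logarithmic factors $j^{-\nu}$, so after summing $n$ and applying $\int_0^t 2^j\e^{-2^{2j}(t-s)}(\cdot)\,ds$ (using Lemma~\ref{l:technical} as in \eqref{e:log1}--\eqref{e:badguy}), one gets an estimate of the form
\[
  t^\beta j^\kappa\|\Delta_j\Vs(w\plt\etazero)\|_\infty
    \lesssim t\,H_{\kappa-2\nu,2,2}(t)\,\|v\|_{\Ys^{\kappa,\beta}_T}
    \lesssim \ell(T)^{\kappa-2\nu}\,\|v\|_{\Ys^{\kappa,\beta}_T},
\]
the difference with \eqref{e:badguy} being that the logarithmic budget is now $\kappa-2\nu\leq 0$ rather than $\kappa-\nu>0$. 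This is exactly the gain produced by iterating the decomposition once more, and it matches the logarithmic sub-criticality threshold $\nu>\frac12$ since we need $\kappa>1$.

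Contraction follows along the same lines: all three terms are at most quadratic in $v$, $R$ is polynomial in $v$, and $\Vs(\cdot\plt\cdot)$ is bilinear, so the differences $\Phi(v)-\Phi(v')$ split into terms each of the same structure as above multiplied by $v-v'$ in one argument, and one re-uses the same scale-by-scale bounds (the third statement of Lemma~\ref{l:remainder} handles the mixed $R$-terms). The random smallness of $T$ is chosen so that the sum of the three prefactors (from \eqref{e:poly1}, from Lemma~\ref{l:remainder}, and from the double-iterated estimate) is strictly less than $1$ on a fixed ball; at the endpoint $\kappa=2\nu$ one must rely on the $T^\epsilon$-smallness of $\|\etaunodue\|$ and $g_T$ rather than on $\ell(T)^{\kappa-2\nu}$ alone, which is the only delicate accounting point but is otherwise routine.
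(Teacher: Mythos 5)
Your argument is essentially the paper's own proof: everything reduces to the doubly--iterated term $\Vs(\Vs(v\plt\etazero)\plt\etazero)$, estimated exactly as you do by feeding the scale--by--scale bound \eqref{e:badguy} for $\Vs(v\plt\etazero)$ into the outer paraproduct against $\etazero$ and invoking Lemma~\ref{l:technical}, which yields the prefactor $\ell(T)^{\kappa-2\nu}$ (the paper books this as $tH_{\kappa-\nu,2,2}(t)\ell(t)^{-\nu}$ rather than your $tH_{\kappa-2\nu,2,2}(t)$, an immaterial difference), while $R(v)$ and $\Vs(R(v)\plt\etazero)$ are handled by Lemma~\ref{l:remainder} and the conclusion follows by the same Banach fixed point on a small ball with random small $T$. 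Your explicit flagging of the endpoint $\kappa=2\nu$ is if anything more careful than the paper, which passes over that accounting silently.
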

\begin{proof}
  Everything boils down to an estimate of
  $\Vs(\Vs(v\plt\etazero)\plt\etazero)$. All other terms are
  taken care of by Lemma~\ref{l:remainder}.
  Consider $v\in\Ys^{\kappa,\beta}_T$, with
  $\|v\|_{\Ys^{\kappa,\beta}_T}\leq 1$.
  The estimate
  \eqref{e:badguy} yields
  \[
    \|\Delta_j\Vs(v\plt\etazero)\|_\infty
      \lesssim j^{-\nu}2^{2j}t^{1-\beta}\e^{-2^{2j}t}.
  \]
  Thus, by Lemma~\ref{l:technical},
  \[
    \begin{aligned}
      \|\Delta_j(\Vs(v\plt\etazero)\plt\etazero)\|_\infty
        &\lesssim j^{-\nu}2^j\e^{-2^{2j}t}\sum_{n=0}^{j-2}
          \|\Delta_h\Vs(v\plt\etazero)\|_\infty\\
        &\lesssim j^{-\nu}2^j\e^{-2^{2j}t}\sum_{n=0}^{j-2}
          n^{-\nu}2^{2n}\e^{-2^{2n}t}t^{1-\beta}\\
        &=j^{-\nu}2^j\e^{-2^{2j}t}t^{1-\beta}H_{-\nu,2,2}(t)\\
        &\lesssim j^{-\nu}2^j\e^{-2^{2j}t}t^{-\beta}\ell(t)^{-\nu},
    \end{aligned}
  \]
  therefore,
  \[
    \begin{aligned}
      t^\beta j^\kappa\|\Delta_j\Vs(\Vs(v\plt\etazero)\plt\etazero)\|_\infty
        &\lesssim t^\beta j^{\kappa-\nu} 2^{2j}
          \int_0^t \e^{-2^{2j}(t-s)}s^{-\beta}\e^{-2^{2j}s}
          \ell(s)^{-\nu}\,ds\\
        &\lesssim t j^{\kappa-\nu} 2^{2j}\e^{-2^{2j}t}\ell(t)^{-\nu}\\
        &\leq t H_{\kappa-\nu,2,2}(t)\ell(t)^{-\nu}\\
        &\lesssim \ell(T)^{\kappa-2\nu}
    \end{aligned}
  \]
  This is sufficient to prove a fixed point theorem, with existence time
  dependent on the random constants in the above estimate and in
  Lemma~\ref{l:remainder}.
\end{proof}
\begin{remark}
  We wish to point out that the necessity of a local description
  to set out a problem amenable to a fixed point argument as we
  have done above, emerges only for initial conditions in
  (almost) critical spaces. Indeed, we know (see Remark~\ref{r:icc1})
  that if $\delta>\frac12$, then Theorem~\ref{t:fix1} is sufficient
  to find solutions with initial conditions in critical spaces.
  The challenge for random initial conditions rests in the case
  $\delta\leq\frac12$. If $\delta=\frac12$ the only open case
  is the critical case and can be sorted out as we have done in
  this section.
  
  Consider the case $\delta<\frac12$. It is not difficult to
  see that, as long as we require that the initial condition
  is sub--critical and $\etaunodue$ is
  well defined (that is $\etauno$ has a integrable singularity
  at $t=0$), then the term $\Vs(v,\etazero)$ makes sense in
  the right space and Theorem~\ref{t:fix2} provides a solution.
  The case of initial conditions in critical spaces is a
  different story. Here we need again the methods we have
  illustrated in this section. The computations are very
  similar.
\end{remark}
Finally, we wish to discuss to what extent the solution
provided by Theorem~\ref{t:local} is a solution of
problem \eqref{e:burgers2}, and in turns of problem
\eqref{e:burgers}.
\begin{proposition}
  Under the assumptions of Theorem~\ref{t:local} above, if
  $v$ is the solution defined on $[0,T]$ provided by the
  above--mentioned theorem,
  then \eqref{e:burgers2} holds in $\Xs^{0,\beta}_{T'}$
  for some {a.\,s.} positive random time $T'\leq T$.
\end{proposition}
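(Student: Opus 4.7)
The goal is to verify that $v$, beyond solving the iterated equation \eqref{e:second}, also solves the ``first--level'' equation $v = R(v) + 2\Vs(v\plt\etazero)$ on some sub--interval $[0,T']$. Once this is established, decomposing $v\etazero = v\plt\etazero + v\peq\etazero + v\pgt\etazero$ and unpacking the definition of $R$ turns the first--level equation into the mild form $v = \Vs(v,v) + 2\Vs(v,\etazero) + \etaunodue$ of \eqref{e:burgers2}; since each piece lives in $\Xs^{0,\beta}_{T'}$, the identity holds there. To this end, introduce the defect $w \eqdef v - 2\Vs(v\plt\etazero) - R(v)$. A preliminary step is to check that $w \in \Ys^{\nu,\beta}_T$: $v \in \Ys^{\kappa,\beta}_T \subset \Ys^{\nu,\beta}_T$ since $\kappa > 1 \geq \nu$; $R(v)$ lies in $\Ys^{\nu,\beta}_T$ (in fact strictly better) by Lemma~\ref{l:remainder}; and $\Vs(v\plt\etazero)\in\Ys^{\nu,\beta}_T$ by estimate \eqref{e:badguy} taken at $\kappa'=\nu$.

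Next I derive a linear equation for $w$. Factoring the first two summands on the right--hand side of \eqref{e:second} as $2\Vs\bigl((2\Vs(v\plt\etazero)+R(v))\plt\etazero\bigr) = 2\Vs((v-w)\plt\etazero)$, the iterated equation reads $v = 2\Vs(v\plt\etazero) - 2\Vs(w\plt\etazero) + R(v)$; combined with the definition of $w$ this gives the linear, homogeneous identity $w = -2\Vs(w\plt\etazero)$, and a further substitution yields
\[
    w = 4\Vs(\Vs(w\plt\etazero)\plt\etazero).
\]
This has exactly the shape of the ``troublesome'' iterated term treated in the proof of Theorem~\ref{t:local}, so I rerun that estimate verbatim, the only difference being that the inner function $w$ now has regularity $\nu$ rather than $\kappa>1$. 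In the paraproduct bound the sum $\sum_{m<j-2} m^{-\nu}$, which was uniformly bounded there, now grows like $j^{1-\nu}$ (or $\log j$ at $\nu=1$), inserting an extra factor $j^{1-2\nu}$ in the intermediate estimate. Pushing this factor through the heat--kernel integral and Lemma~\ref{l:technical} yields
\[
    \|\Vs(\Vs(w\plt\etazero)\plt\etazero)\|_{\Ys^{\nu,\beta}_T}
      \lesssim \ell(T)^{1-2\nu}\|w\|_{\Ys^{\nu,\beta}_T}.
\]

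Since $\nu \in (\tfrac12, 1]$ by assumption, $1-2\nu<0$ and the coefficient $\ell(T')^{1-2\nu}$ tends to $0$ as $T'\downarrow 0$. Hence for an {a.\,s.} positive random time $T'\leq T$ (chosen so that the implicit constant times $\ell(T')^{1-2\nu}$ is strictly less than $\tfrac14$; the constants are {a.\,s.} finite because the stochastic objects $\etazero$, $\etauno$, $\etaunodue$ have finite moments) the linear operator $w \mapsto 4\Vs(\Vs(w\plt\etazero)\plt\etazero)$ is a strict contraction on $\Ys^{\nu,\beta}_{T'}$, whose only fixed point is $0$. Thus $w \equiv 0$ on $[0,T']$, which is the desired identity. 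The main technical point is precisely this adaptation of the Theorem~\ref{t:local} bound to an inner function of weaker $\Ys^{\nu,\beta}_T$--regularity; reassuringly, the decay of $\ell(T')^{1-2\nu}$ requires exactly the standing hypothesis $\nu>\tfrac12$, so no further restriction on the initial datum is imposed.
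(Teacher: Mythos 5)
Your argument is correct, but it proves the proposition by a genuinely different route than the paper. You work directly with the defect $w=v-2\Vs(v\plt\etazero)-R(v)$: the algebra turning \eqref{e:second} into $w=-2\Vs(w\plt\etazero)$ and then $w=4\Vs(\Vs(w\plt\etazero)\plt\etazero)$ is right, your preliminary check that $w\in\Ys^{\nu,\beta}_T$ (via \eqref{e:badguy} with weight $j^\nu$ and Lemma~\ref{l:remainder}) is the correct finiteness input for the uniqueness step, and your adaptation of the iterated estimate with an inner factor of regularity $\nu$ only does produce the factor $\ell(T)^{1-2\nu}$ (the sum $\sum_{n\le j-2}n^{-\nu}\sim j^{1-\nu}$ feeds through $G_{1-2\nu,2,2}$ and Lemma~\ref{l:technical} exactly as you say; at $\nu=1$ the extra $\log j$ is harmless, e.g.\ bound $n^{-1}\log n\lesssim n^{-1+\epsilon}$, which still gives a vanishing power of $\ell(T)$), so the linear map is a contraction for small $T'$ and $w\equiv0$, whence $v=R(v)+2\Vs(v\plt\etazero)$, i.e.\ \eqref{e:burgers2} in $\Xs^{0,\beta}_{T'}$. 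The paper instead argues by approximation: it first shows that any solution of \eqref{e:second} enjoys the improved decay $\|v\|_{\kappa,2\nu-\kappa,\beta,T}<\infty$, then mollifies the initial datum, uses that the smooth solutions $v^n$ of \eqref{e:burgers2} exist, proves $\|v_n-v\|_{\kappa,2\nu-\kappa,\beta,T}\to0$ together with convergence of $R_n(v_n)$ and of $\Vs(v_n\plt\etazero_n)$, and passes to the limit in the first--level equation. Your defect argument is shorter and self--contained (no approximating solutions, no convergence of the mollified stochastic objects is needed), while the paper's route buys an additional consistency statement: it exhibits $v$ as the limit of solutions with smooth data, which also justifies the interpretation of the paraproduct splitting of $v\etazero$ in \eqref{e:burgers2}. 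Either way the restriction $\nu>\tfrac12$ enters in the same place, through the sign of the exponent $1-2\nu$ (respectively $\kappa-2\nu$).
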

\begin{proof}
  We give a quick sketch. Define the norm
  \[
    \|\cdot\|_{\kappa,\nu,\beta,T}
      \eqdef\sup_{[0,T]}t^\beta \ell(t)^\nu\|\Delta_j\cdot\|_\infty
  \]
  then the arguments in the proofs of Lemma~\ref{l:remainder} and
  Theorem~\ref{t:local} show that
  \begin{itemize}
    \item $\|w\|_{\kappa,\nu,\beta,T}<\infty\Longrightarrow
      \|\Vs(w\plt\etazero)\|_{\kappa,2\nu-\kappa,\beta,T}<\infty$,
    \item $w\in\Ys_T^{\kappa,\beta}\Longrightarrow
      \|\Vs(\Vs(w\plt\etazero)\plt\etazero)\|_{\kappa,2\nu-\kappa,\beta,T}<\infty$.
  \end{itemize}
  Thus, a solution of \eqref{e:second} satisfies
  $\|v\|_{\kappa,2\nu-\kappa,\beta,T}<\infty$.
  
  Let now $(u_0^n)_{n\geq1}$ be a sequence of smooth random
  fields (obtained for instance from $u_0$ by convolution)
  such that $u_0^n\to u_0$, in the sense that
  $\sup_j j^{-1/2}2^{-j}\|\Delta_j(u_0^n-u_0)\|_\infty\to0$,
  with similar convergence for $\etazero_n$ and
  $\etaunodue_n$ (in the appropriate norms), where
  $\etazero$ and $\etaunodue$ are the stochastic objects
  derived from $u_0^n$. If, for every $n$, $v^n$ is the
  solution of \eqref{e:burgers2} (with $\etazero_n$ and
  $\etaunodue_n$), then there is a {a.\,s.} positive random
  time $T$ such that $\sup_n \|v^n\|_{\kappa,2\nu-\kappa,\beta,T}<\infty$.
  Set $w_n=v_n-v$, $p_n=\etazero_n-\etazero$,
  $q_n=\etaunodue_n-\etaunodue$, then
  \[
    R_n(v_n)-R(v)
      = \Vs(v_n+v,w_n) + q_n + 2\Vs(w_n\pgeq\etazero_n)+2\Vs(v\pgeq p_n),
  \]
  where $R_n$ is the remainder with $\etazero_n$ and $\etaunodue_n$.
  and, using estimate similar to those in the proof of
  Lemma~\ref{l:remainder}, we see that
  $\|R_n(v_n)-R(v)\|_{\kappa,\nu,\beta,T}\to0$.
  Likewise, since
  \[
    \begin{multlined}[.9\linewidth]
      w_n
        = 4\Vs(\Vs(w_n\plt\etazero_n)\plt\etazero_n)
          + 4\Vs(\Vs(v\plt p_n)\plt\etazero_n) +\\
          + 4\Vs(\Vs(v\plt \etazero_n)\plt p_n)
          + 2\Vs((R_n(v_n)-R(v))\plt\etazero_n) +\\
          + 2\Vs(R(v)\plt p_n)
          + R_n(v_n)-R(v),
    \end{multlined}
  \]
  we have that $\|w_n\|_{\kappa,2\nu-\kappa,\beta,T}\to0$
  (using also estimates as those in Theorem~\ref{t:local}).
  
  Now
  \[
    v_n
      = \Vs(v_n,v_n) + 2\Vs(v_n,\etazero_n) + \etaunodue_n
      = R_n(v_n) + 2\Vs(v_n\plt\etazero_n)
  \]
  and it remains to show that the term $\Vs(v_n\plt\etazero_n)$
  converges. Indeed,
  \[
    \begin{multlined}[.98\linewidth]
    t^\beta\|\Delta_j\Vs(w_n\plt\etazero)\|_\infty\approx\\
      \approx \|w_n\|_{\kappa,2\nu-\kappa,\beta,T}t^\beta\sqrt j 2^{2j}
        \int_0^t \e^{-2^{2j}(t-s)}\e^{-2^{2j}s}\sum_{n=0}^{j-2}
        s^{-\beta}n^{-\kappa}\ell(s)^{-\nu}\,ds\lesssim\\
      \lesssim t H_{\frac12,2,2}(t)\ell(t)^{-\nu}
        \|w_n\|_{\kappa,2\nu-\kappa,\beta,T}
      \lesssim\|w_n\|_{\kappa,2\nu-\kappa,\beta,T},
    \end{multlined}
  \]
  and likewise for $\Vs(v_n,p_n)$.
\end{proof}
\begin{remark}
  We remark that an attempt to run a fixed point in the space
  where the norm $\|\cdot\|_{\kappa,\nu,\beta,T}$ would fail
  when trying to prove the self mapping property for the term
  $\Vs(v\plt\etazero)$.
\end{remark}
We conclude with an elementary analytical lemma.
\begin{lemma}\label{l:technical}
  Set for every $\nu\in\R$, $p\geq0$, $\tau>0$, and $t>0$,
  \[
    G_{\nu,p,\tau}(t)
      = \sum_{n=1}^\infty n^\nu 2^{pn}\e^{-2^{\tau n}t},
        \qquad
    H_{\nu,p,\tau}(t)
      = \sup_{n\geq1} n^\nu 2^{pn}\e^{-2^{\tau n}t}.
  \]
  Then for $p>0$ and $\nu\in\R$,
  \[
    H_{\nu,p,\tau}
      \leq G_{\nu,p,\tau}(t)
      \lesssim t^{-\frac{p}\tau}\ell(t)^\nu.
  \]
  Moreover, if $p=0$, $\nu\in\R$,
  \[
    H_{\nu,0,\tau}
      \leq \ell(t)^{\nu_+}.
  \]
\end{lemma}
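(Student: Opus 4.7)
The plan is to treat the supremum and the sum separately, using the fact that the summand $a_n := n^\nu 2^{pn}\e^{-2^{\tau n}t}$ is essentially unimodal in the variable $2^{\tau n}t$. The bound $H_{\nu,p,\tau}(t)\leq G_{\nu,p,\tau}(t)$ is immediate since $G$ is a series of non--negative terms.

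For the main estimate when $p>0$, I isolate the scale where $a_n$ peaks. Let $N=N(t)$ be the unique positive integer with $2^{\tau(N-1)}t<1\leq 2^{\tau N}t$, so that $N\sim\tau^{-1}\log_2(1/t)$ for small $t$, and $N$ stays bounded for $t$ bounded below. I would split
\[
  G_{\nu,p,\tau}(t)
    = \sum_{n=1}^{N}a_n + \sum_{n>N}a_n
\]
and show each piece is of order $N^\nu 2^{pN}\lesssim \ell(t)^\nu t^{-p/\tau}$ for small $t$ (the case of $t$ bounded away from $0$ is trivial since $G$ then converges to a finite constant by the double exponential decay, while $t^{-p/\tau}\ell(t)^\nu$ is bounded below).

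For $n\leq N$, one has $\e^{-2^{\tau n}t}\in[\e^{-1},1]$, so $a_n$ is comparable to $n^\nu 2^{pn}$. When $\nu\geq 0$, I pull $n^\nu\leq N^\nu$ out and use the geometric bound $\sum_{n\leq N}2^{pn}\lesssim 2^{pN}$. When $\nu<0$ the sequence $n^\nu 2^{pn}$ is not monotone, but is eventually increasing past a turning point $n_0(\nu,p)$ depending only on $\nu, p$; I split once more at $n_0$, bounding the finite initial segment by a constant and the tail geometrically by its last term $N^\nu 2^{pN}$. The residual constant is absorbed since $t^{-p/\tau}\ell(t)^\nu\to\infty$ as $t\to 0$. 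For $n>N$, the substitution $n=N+k$ together with $2^{\tau n}t\geq 2^{\tau k -\tau}$ gives
\[
  \sum_{n>N}a_n
    \leq N^\nu 2^{pN}\sum_{k\geq 1}(1+k/N)^{|\nu|}\,2^{pk}\,\e^{-2^{\tau k-\tau}},
\]
and the doubly exponential decay $\e^{-2^{\tau k}}$ swallows the polynomial and exponential factors in $k$, so the $k$--sum is a finite constant uniform in $N$.

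For the $p=0$ bound, if $\nu\leq 0$ both $n^\nu$ and $\e^{-2^{\tau n}t}$ lie in $[0,1]$, giving $H_{\nu,0,\tau}(t)\leq 1=\ell(t)^{\nu_+}$. If $\nu>0$, the supremum is effectively taken over $n\lesssim N$, since for $n>N$ the exponential factor beats $n^\nu$; on that range $n^\nu\leq N^\nu\lesssim \ell(t)^\nu$ for small $t$, and $H_{\nu,0,\tau}$ is bounded for $t$ bounded below. The only genuinely subtle point is the non--monotonicity of $n^\nu 2^{pn}$ when $\nu<0$, requiring the secondary split at $n_0(\nu,p)$; everything else is geometric--series bookkeeping.
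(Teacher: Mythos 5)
The paper states Lemma~\ref{l:technical} without proof (it is presented as an elementary fact), so there is no argument of the authors to compare against; your proposal supplies the missing proof, and it is correct and along the natural lines. The bound $H\leq G$ is indeed trivial, and for $p>0$ the split at the critical index $N$ with $2^{\tau(N-1)}t<1\leq 2^{\tau N}t$ (so that $2^{pN}\asymp t^{-p/\tau}$ and $N\asymp\ell(t)$ for small $t$), the estimate $\sum_{n\leq N}n^\nu 2^{pn}\lesssim N^\nu 2^{pN}+C(\nu,p)$ — where your secondary split at $n_0(\nu,p)$ correctly handles the non-monotone range for $\nu<0$, since the ratio $2^p(1+1/n)^\nu$ exceeds a fixed $q>1$ once $n\geq n_0$ — and the tail bound via the convergence of $\sum_{k\geq1}(1+k)^{|\nu|}2^{pk}\e^{-2^{\tau k-\tau}}$, uniformly in $N$, all work as you describe; the $p=0$ case is likewise fine. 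Two small repairs: (i) your disposal of ``$t$ bounded away from $0$'' is misstated if the constant is to be uniform over all $t>0$, because for $p>0$ the right-hand side $t^{-p/\tau}\ell(t)^\nu$ tends to $0$ as $t\to\infty$ and hence is not bounded below on $(t_0,\infty)$; the one-line fix is $G_{\nu,p,\tau}(t)\leq \e^{-2^\tau(t-1)}G_{\nu,p,\tau}(1)$ for $t\geq1$, which decays faster than any power of $t$ (in the paper the lemma is only invoked for $t$ in a bounded interval, so this regime is anyway immaterial); (ii) for $n=N$ the factor $\e^{-2^{\tau n}t}$ lies only in $(\e^{-2^\tau},\e^{-1}]$ rather than $[\e^{-1},1]$, which is harmless since your upper bound uses only $\e^{-2^{\tau n}t}\leq1$. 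Finally, note that what you prove in the $p=0$ case is $H_{\nu,0,\tau}(t)\lesssim\ell(t)^{\nu_+}$, which is also how the lemma is used in the paper; the literal constant-one inequality in the statement can fail slightly for $\nu>0$ (for instance $\tau=\nu=1$ and $t=\e^{-10}$ give $\sup_n n\,\e^{-2^n t}>10=\ell(t)$), so your reading with an implicit constant is the correct one.
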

%%
%%
%%

%\bibliographystyle{amsplain}
%\bibliography{ic.bib}
\end{document}